\documentclass[11pt]{article}
\usepackage[a4paper, margin=1in]{geometry}
\usepackage{amsmath, amssymb, amsthm, mathtools, bm}
\usepackage{algorithm}
\usepackage{algpseudocode}
\usepackage{enumitem}
\usepackage[T1]{fontenc}
\usepackage{lmodern}
\usepackage{microtype}
\usepackage[hidelinks]{hyperref}

\newtheorem{theorem}{Theorem}[section]
\newtheorem{lemma}[theorem]{Lemma}
\newtheorem{proposition}[theorem]{Proposition}
\newtheorem{assumption}{Assumption}
\newtheorem{corollary}[theorem]{Corollary}
\newtheorem{definition}[theorem]{Definition}
\theoremstyle{remark}
\newtheorem{remark}[theorem]{Remark}

\newcommand{\R}{\mathbb{R}}
\newcommand{\Lzero}{\mathcal{L}_0}
\newcommand{\Ugap}{\mathcal{U}}
\newcommand{\Deltam}{\Delta_m}
\newcommand{\argmin}{\mathop{\rm argmin}}
\newcommand{\argmax}{\mathop{\rm argmax}}
\newcommand{\OO}{\mathcal{O}}

\title{Global \(o(1/k^2)\) Merit Complexity of Regularized Newton Methods for Convex Multiobjective Optimization}
\author{
 Yuqia Wu\footnote{School of Mathematical Sciences, Shenzhen University, 
 	China (\href{mailto:mayuqiawu@szu.edu.cn}{mayuqiawu@szu.edu.cn}).} ,\ \ 
 Yue Wang\footnote{Independent Researcher, ORCID:0000-0001-7988-8791(\href{mailto:yuewangm@outlook.com}{yuewangm@outlook.com}).},\ \
 Yaohua Hu\footnote{School of Mathematical Sciences, Shenzhen University, 
 	China (\href{mailto:mayhhu@szu.edu.cn}{mayhhu@szu.edu.cn}).}
 }
\date{}

\begin{document}
    \maketitle

    \begin{abstract}
    We investigate a regularized Newton method for unconstrained convex multi-objective optimization with twice continuously differentiable objectives whose Hessians are Lipschitz continuous. At each iteration, the method minimizes the quadratically regularized max-envelope of the local quadratic models.  Using a Tanabe-type merit function, we prove that this merit decays at the global asymptotic rate $o(1/k^2)$ under the compactness assumption on the initial component-wise lower level set. This result also covers the single-objective case as a special case. Finally, we construct an explicit one-dimensional convex bi-objective family showing that no uniform merit estimate of order $\mathcal O(k^{-(2+\delta)})$ can hold for any fixed $\delta>0$. Thus the exponent $2$ is essentially sharp in the uniform polynomial sense, despite the $o(1/k^2)$ decay on each fixed trajectory.
    \end{abstract}

    \section{Introduction}

    We consider the unconstrained convex multi-objective optimization problem
    \begin{equation}\label{model}
        \min_{x\in\R^n}F(x):=(f_{1}(x),\dots,f_{m}(x)),
    \end{equation}
    where each component function $f_i:\R^n\to\R$ is convex and twice continuously differentiable, and $\nabla^2 f_i$ is Lipschitz continuous with modulus $H_i>0$. We write 
    $$H:= \max_{i\in [m]} H_i.$$
    Problems of the form \eqref{model} arise when several competing criteria must be optimized simultaneously, and the natural goal is to compute a weak Pareto optimal point rather than a minimizer of a prescribed scalarization.

    For smooth multi-objective optimization, global complexity theory has been developed for first-order descent methods. A foundational contribution is the steepest descent method by Fliege and Svaiter \cite{fliege2000}, which computes a common descent direction by solving a convex subproblem and does not require a fixed scalarization in advance. More precisely, at a current iterate $x^k$, the steepest descent method updates
    \[
    x^{k+1} = x^k + \alpha_k d^k \ \ {\rm with} \ \ d^k\in \argmin_{d\in\R^n} \left\{ \max_{i\in[m]}\langle \nabla f_i(x^k),d\rangle        +\frac12\|d\|^2 \right\},
    \]
    where $\alpha_k>0$ is chosen by a suitable stepsize rule. The subproblem selects a direction that balances descent among all component functions and preserves the scalarization-free nature of the method. Later work clarified the worst-case complexity of this method. In particular, Fliege, Vaz, and Vicente \cite{fliege2019} showed that multi-objective gradient descent has complexity bounds parallel to those of scalar gradient descent, including an $\OO(k^{-1/2})$ stationarity rate for nonconvex objectives, an $\OO(k^{-1})$ rate in the convex case, and a linear rate under strong convexity.

    This first-order min-max viewpoint has also been extended beyond the smooth setting. For composite multi-objective problems, Tanabe, Fukuda, and Yamashita \cite{tanabe19} introduced proximal gradient methods, and provided rates of order $\OO(k^{-1/2})$, $\OO(k^{-1})$, and linear convergence based on suitable merit functions in nonconvex, convex, and strongly convex or PL-type settings, respectively \cite{tanabe23,tanabe24}. A distinctive feature of accelerated multi-objective methods is that the active convex combination of objective gradients may vary along the iterates. El Moudden and El Mouatasim \cite{elmoudden2021} obtained an $\OO(k^{-2})$ rate for an accelerated diagonal steepest descent method under an eventual fixed-multiplier assumption. Tanabe, Fukuda, and Yamashita \cite{tanabe2023} later proposed an accelerated proximal gradient method and proved a global $\OO(k^{-2})$ rate for a merit function under less restrictive assumptions. This rate was refined by Zhang and Yang \cite{zhang2023convergence}, which showed that the convergence rate of the multi-objective accelerated proximal gradient method can be improved from $\OO(k^{-2})$ to $o(k^{-2})$. Related inertial and Nesterov-type approaches obtain $\OO(k^{-2})$- and $\OO(t^{-2})$-type rates in discrete and continuous-time settings, respectively \cite{sonntag2024jota,sonntag2024siopt}. These results indicated that the merit complexity is now well developed for first-order multi-objective methods, especially in the convex setting.

    By contrast, although second-order methods for multi-objective optimization have been studied for a long time, their global merit complexity theory is far from completion. Fliege, Gra\~na Drummond, and Svaiter \cite{fliege09} proposed a Newton method whose direction is obtained from minimizing a max-envelope of the quadratic models of the component functions, and proved the convergence to Pareto critical points, as well as local quadratic convergence under additional strongly convex assumptions. Quasi-Newton methods were developed to reduce the cost of Hessian evaluations, including the methods of Qu, Goh, and Chan \cite{qu2011}, Povalej \cite{povalej2014}, and recent BFGS-type schemes \cite{prudente2024} for nonconvex multi-objective problems. Regularization has also been explored at this line of work. For convex multi-objective optimization, Wang and Liu \cite{wang2012regularized} proposed a regularized Newton method and proved the global convergence under compact level-set assumptions. Recently, cubic-regularization methods have provided stationarity-type guarantees for nonconvex multiobjective optimization, including $O(k^{-2/3})$-type rates in \cite{ghosh2025,goncales2025cubic}, and the latter also covers inexact derivative information and finite-difference implementations.

    These results leave open a different question in the convex setting. In scalar convex optimization, a central complexity measure is the objective gap $f(x^k)-f^\star$, not only stationarity residuals. For multi-objective problems, an analogous role is played by merit functions \cite{tanabe24}, which vanish precisely on the weak Pareto solution set. The available second-order theory controls convergence to Pareto criticality, local behavior, or the number of iterations needed to make a stationarity residual small, but it does not provide decay estimates for such merit functions along the generated iterates. This is the complexity gap addressed in the present paper.

    The scalar optimization theory provides a useful reference. For smooth convex optimization, gradient descent has the classical $\OO(k^{-1})$ rate, while Nesterov's accelerated method has $\OO(k^{-2})$ rate \cite{nesterov1983}. In the composite case, FISTA gives the same order \cite{beck2009}. For second-order methods with Lipschitz continuous Hessians, the cubic regularized Newton method of Nesterov and Polyak \cite{nesterov2006cubic} achieves a global $\OO(k^{-2})$ rate on convex problems. Recently, Mishchenko \cite{mishchenko2023regularized} showed that a regularized Newton method with a square-root regularization rule also attains a global $\OO(k^{-2})$ rate.
    Compared with the mature scalar theory, second-order methods for convex multi-objective optimization still lack sharp global rates for merit functions, which play the role of objective gaps in the scalar case.  

    Motivated by this observation, the present paper develops a multi-objective regularized Newton method. At iteration $k$, the method computes $d^k$ by minimizing the max-envelope of the local quadratic models, regularized by $\eta_k\|d\|^2/2$, where $\eta_k$ is related to the current multi-objective stationarity measure.
   A central issue in the analysis of multi-objective methods is the choice of a scalar merit function, because the objective value is vector-valued and therefore does not by itself define a scalar optimality gap. We use the Tanabe-type merit function \cite{tanabe24}:
    \begin{equation}\label{eq:def_U}
    \Ugap(x):=
    \sup_{z\in\R^n}
    \min_{i\in[m]}\big\{f_i(x)-f_i(z)\big\}.
    \end{equation}
    This function is exact for weak Pareto optimality in the sense that $\Ugap(x)\ge0$ and $\Ugap(x)=0$ if and only if $x$ is a weak Pareto optimal point; see \cite[Theorem~3.1]{tanabe24}. Therefore, complexity estimates for $\Ugap(x^k)$ play the role of objective gap estimates in scalar convex optimization. 

    The paper makes two main contributions.

    \begin{itemize}
        \item[{\rm (i)}] We prove the global merit complexity estimate
        \[
            \mathcal U(x^k)=o(1/k^2)
        \]
        for the regularized Newton method. This gives a second-order counterpart to the accelerated first-order merit rates known for convex multi-objective optimization \cite{tanabe2023,zhang2023convergence}. In particular, when $m=1$, the method reduces to the square-root regularized Newton method studied by Mishchenko~\cite{mishchenko2023regularized}, and the merit function $\Ugap$ reduces to the usual scalar objective gap. Thus, our result recovers the scalar $\OO(1/k^2)$ bound and further refines it to $o(1/k^2)$.

        Moreover, it is worth noting that the analysis is not a direct extension of the scalar argument of Mishchenko~\cite{mishchenko2023regularized}. In the multi-objective setting, there is no distinguished objective gap $f(x^k)-f^\star$, and the active convex combination of component models may vary along the trajectory. These features require a separate merit analysis for the multi-objective method.
    
    \item[{\rm (ii)}] We show that the polynomial exponent $2$ is essentially sharp in a uniform worst-case sense. We construct an explicit one-dimensional convex bi-objective family, and show that no estimate of the form
    \[
    \Ugap(x^k)=\OO(k^{-(2+\delta)})
    \]
    can hold uniformly over the family for any $\delta>0$. Hence the exponent $2$ cannot be improved uniformly, even though the actual rate is $o(1/k^2)$.
    \end{itemize}

    The remainder of the paper is organized as follows. Section~\ref{sec:preliminaries} introduces the notations and collects preliminary facts on weak Pareto optimality and the stationarity measure. Section~\ref{sec:algorithm} presents the multi-objective regularized Newton method, establishes its descent estimates, and proves the global $o(1/k^2)$ merit rate. Section~\ref{sec:barrier} constructs an explicit barrier family and shows the uniform sharpness of the polynomial exponent $2$.
  
    \section{Preliminaries}
    \label{sec:preliminaries}
    \subsection{Notations}
    Throughout this paper, $\R^n$ denotes the $n$-dimensional Euclidean space equipped with the standard inner product $\langle\cdot,\cdot\rangle$ and the induced norm $\|\cdot\|$. We write $\mathbb N:=\{0,1,2,\ldots\}$ and $\R_+:=[0,\infty)$.
    For a positive integer $m$, let $[m]:=\{1,2,\ldots,m\}$. For vectors $u,v\in\R^m$, the inequalities $u\le v$ and $u<v$ are understood component-wise, that is, $u_i\le v_i$ and $u_i<v_i$ for all $i\in[m]$, respectively. For matrices, $\|\cdot\|$ denotes the induced operator norm, and $A\succeq0$ means that the symmetric matrix $A$ is positive semidefinite.
    Let
    \[
    \Deltam:=\left\{\lambda\in\R_+^m:\ \sum_{i=1}^m\lambda_i=1\right\}
    \]
    be the unit simplex in $\R^m$. 
    For a set $S\subset\R^n$, $\operatorname{conv}(S)$ denotes its convex hull. 
    Given an initial point $x^0\in\R^n$, the component-wise lower level set of $F$ at $x^0$ is
    \[
    \operatorname{Lev}_F(x^0) := \{x\in\R^n:\ f_i(x)\le f_i(x^0),\ \forall i\in[m]\}.
    \]

    \subsection{Basic properties and Pareto optimality}
    We collect two elementary consequences of the Hessian Lipschitz continuity of the component functions. These estimates will be used in the analysis of the regularized Newton step. Recall that $H>0$ is chosen as a common Hessian Lipschitz constant, namely
    \[
    \|\nabla^2 f_i(x)-\nabla^2 f_i(y)\| \le H\|x-y\|, \quad \forall x,y\in\R^n,\ i\in[m].
    \]
    We recall the following two inequalities established in \cite[Lemma 1]{nesterov2006cubic}.
    \begin{lemma}
        \label{lem:upperbound} For every $x, d\in \mathbb{R}^{n}$ and $i\in [m]$,
        it holds that
        \begin{align}
            f_{i}(x+d) \le f_{i}(x)+\langle\nabla f_{i}(x),d\rangle+\frac{1}{2} d^{\top}\nabla^{2} f_{i}(x)d+\frac{H}{6}\|d\|^{3}, \label{eq:func_upper} \\
            \|\nabla f_{i}(x+d)-\nabla f_{i}(x)-\nabla^{2} f_{i}(x)d\|\le \frac{H}{2}\|d\|^{2}. \label{eq:grad_remainder}
        \end{align}
    \end{lemma}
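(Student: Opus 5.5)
The plan is to reduce both inequalities to integral remainder formulas along the segment from $x$ to $x+d$, and then bound the remainders with the Hessian Lipschitz condition. Fix $i\in[m]$ and $x,d\in\R^n$, and set $\phi(t):=f_i(x+td)$ for $t\in[0,1]$. Since $f_i$ is $C^2$, $\phi$ is $C^2$ with $\phi'(t)=\langle\nabla f_i(x+td),d\rangle$ and $\phi''(t)=d^\top\nabla^2 f_i(x+td)d$.

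I would prove \eqref{eq:grad_remainder} first, since \eqref{eq:func_upper} then follows from it. The fundamental theorem of calculus applied to $t\mapsto\nabla f_i(x+td)$ gives
\[
\nabla f_i(x+d)-\nabla f_i(x)-\nabla^2 f_i(x)d=\int_0^1\big(\nabla^2 f_i(x+td)-\nabla^2 f_i(x)\big)d\,dt .
\]
Taking norms and using $\|\nabla^2 f_i(x+td)-\nabla^2 f_i(x)\|\le Ht\|d\|$, the norm is at most $\int_0^1 Ht\|d\|^2\,dt=\frac{H}{2}\|d\|^2$.

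For \eqref{eq:func_upper}, write $\phi(1)=\phi(0)+\int_0^1\phi'(t)\,dt$ and expand $\phi'(t)=\phi'(0)+t\,d^\top\nabla^2 f_i(x)d+r(t)$, where
\[
r(t):=\big\langle\nabla f_i(x+td)-\nabla f_i(x)-\nabla^2 f_i(x)(td),\,d\big\rangle .
\]
By Cauchy--Schwarz and the gradient estimate just proved, applied with step $td$, we get $|r(t)|\le\frac{H}{2}t^2\|d\|^3$. Integrating over $[0,1]$ gives
\[
f_i(x+d)\le f_i(x)+\langle\nabla f_i(x),d\rangle+\tfrac12 d^\top\nabla^2 f_i(x)d+\tfrac{H}{6}\|d\|^3 .
\]
In fact this gives the two-sided bound on the absolute deviation.

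None of these steps is a real obstacle. The only points needing care are the justification of the vector-valued fundamental theorem of calculus, which holds because $\nabla f_i$ is $C^1$, and the factor $t$ in the Lipschitz bound, which produces exactly the constants $1/2$ and $1/6$. Note also that the constant $H=\max_i H_i$ works uniformly for every $i$. Convexity of $f_i$ is not needed here.
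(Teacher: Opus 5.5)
Your proof is correct and is essentially the standard argument behind Lemma~1 of Nesterov--Polyak, which the paper cites rather than reproving. You write the gradient remainder as $\int_0^1(\nabla^2 f_i(x+td)-\nabla^2 f_i(x))d\,dt$ and bound it by $\int_0^1 Ht\|d\|^2\,dt=\frac{H}{2}\|d\|^2$, then integrate the pairing of that remainder (at step $td$) with $d$ to get $\int_0^1\frac{H}{2}t^2\|d\|^3\,dt=\frac{H}{6}\|d\|^3$. Your remarks that $H=\max_i H_i$ works for every component and that convexity is not needed are also accurate.
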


    We next recall the basic notions of optimality for the multi-objective problem \eqref{model}.
    \begin{definition}
    A point $x^\star\in\R^n$ is called a \emph{Pareto optimal point} if there is no $y\in\R^n$ such that 
    \[
    F(y)\le F(x^\star) \quad {\rm and} \quad  F(y)\ne F(x^\star).
    \]
    A point $x^\star\in\R^n$ is called a \emph{weak Pareto optimal point} if there is no $y\in\R^n$ such that
    \[
    F(y)<F(x^\star).
    \]
    We denote the weak Pareto solution set by $\mathcal{X}_w$.
    \end{definition}

   The following characterization states that weak Pareto optimality is equivalent to the nonexistence of a direction that strictly decreases all first-order linearizations; see, e.g., \cite{fliege2000,fliege09}.

\begin{lemma}\label{lem:weak_pareto_characterization}
For any $x\in\R^n$, the following statements are equivalent:
\begin{enumerate}[label={\rm (\roman*)}]
    \item $x$ is a weak Pareto optimal point of \eqref{model}.
    \item there is no direction $d\in\R^n$ such that $\langle \nabla f_i(x),d\rangle <0, \ \forall i\in[m].$
    \item There exists $\lambda\in\Deltam$ such that $\sum_{i=1}^m \lambda_i\nabla f_i(x)=0$. Equivalently, $$
        0\in \operatorname{conv}\{\nabla f_i(x): i\in[m]\}.$$
\end{enumerate}
\end{lemma}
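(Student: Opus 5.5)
We prove the cycle (i)$\Rightarrow$(ii)$\Rightarrow$(iii)$\Rightarrow$(i). Convexity of the $f_i$ is needed only in the last implication; the first two use only differentiability and a separation argument.

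For (i)$\Rightarrow$(ii), we argue by contraposition. Suppose some $d$ satisfies $\langle \nabla f_i(x),d\rangle<0$ for every $i\in[m]$. Since each $f_i$ is differentiable,
\[
f_i(x+td)=f_i(x)+t\langle\nabla f_i(x),d\rangle+o(t).
\]
Because $[m]$ is finite, a single $t>0$ small enough gives $f_i(x+td)<f_i(x)$ for all $i$ at once. Then $F(x+td)<F(x)$, so $x$ is not weakly Pareto optimal.

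For (ii)$\Rightarrow$(iii), we again use contraposition. Suppose $0\notin C:=\operatorname{conv}\{\nabla f_i(x):i\in[m]\}$. The set $C$ is nonempty, convex and compact, so it has a unique point $p$ of minimal norm, and $p\neq 0$. The projection characterization gives $\langle p,c-p\rangle\ge 0$ for all $c\in C$, hence $\langle p,c\rangle\ge\|p\|^2>0$ on $C$. Taking $d:=-p$ yields $\langle\nabla f_i(x),d\rangle\le-\|p\|^2<0$ for every $i$, which contradicts (ii). The two forms of (iii) are equivalent by the definition of the convex hull of finitely many points: $\sum_i\lambda_i\nabla f_i(x)$ with $\lambda\in\Deltam$ ranges exactly over $C$.

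For (iii)$\Rightarrow$(i), fix $\lambda\in\Deltam$ with $\sum_i\lambda_i\nabla f_i(x)=0$ and set $\phi:=\sum_i\lambda_i f_i$. Then $\phi$ is convex with $\nabla\phi(x)=0$, so $x$ is a global minimizer of $\phi$. If some $y$ had $F(y)<F(x)$, then, since $\lambda\ge0$ and $\sum_i\lambda_i=1$ with at least one positive weight,
\[
\phi(y)=\sum_i\lambda_i f_i(y)<\sum_i\lambda_i f_i(x)=\phi(x),
\]
a contradiction. Hence $x$ is weakly Pareto optimal.

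The only step that needs care is the separation step in (ii)$\Rightarrow$(iii), where we take the minimal-norm element of the convex hull to produce the common descent direction. This could equally be done with Gordan's alternative theorem.
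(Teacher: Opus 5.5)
Your proof is correct and complete. The paper does not prove this lemma at all; it only cites \cite{fliege2000,fliege09}. Your cycle is the standard argument behind those references:
\begin{itemize}
\item a first-order expansion for (i)$\Rightarrow$(ii);
\item Gordan's alternative, realized through the minimal-norm element of $\operatorname{conv}\{\nabla f_i(x): i\in[m]\}$, for (ii)$\Rightarrow$(iii). This is exactly the quantity $s(x)$ the paper introduces right after the lemma.
\item global optimality of the convex scalarization $\sum_i\lambda_i f_i$ for (iii)$\Rightarrow$(i). The paper reuses this same device in the proof of Lemma~\ref{lem:U_little_o_s}.
\end{itemize}
You are also right that convexity enters only in the last implication. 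Without it, (ii) and (iii) characterize Pareto criticality rather than weak Pareto optimality.
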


We also use the stationarity measure and the associated set of minimizing
multipliers
\begin{equation}
    \label{eq:def_s}
    s(x):= \min_{\lambda\in\Delta_m} \left\|\sum_{i=1}^{m}\lambda_i\nabla f_i(x)\right\|, \qquad \Lambda(x):= \argmin_{\lambda\in\Delta_m} \left\|\sum_{i=1}^{m}\lambda_i\nabla f_i(x)\right\|.
\end{equation}
The value $s(x)$ is the distance from the origin to $\operatorname{conv}\{\nabla f_i(x):i\in[m]\}$. Hence, by Lemma~\ref{lem:weak_pareto_characterization}, $s(x)=0$ if and only if $x$ is weak Pareto optimal. The set $\Lambda(x)$ is nonempty because $\Deltam$ is compact.

The continuity of $s(\cdot)$ is a standard consequence of the continuity theorem for marginal functions over compact parameter sets; see, e.g., \cite[Theorems~1.17]{RW09}. We include a short direct proof for
completeness.

\begin{lemma}\label{lem:s_continuity}
The function $s$ is continuous on $\R^n$.
\end{lemma}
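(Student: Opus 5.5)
The plan is to show directly that $s$ is Lipschitz-type stable with respect to perturbations of the gradients, and then use the continuity of each $\nabla f_i$. For $x\in\R^n$ and $\lambda\in\Deltam$, write $g(x,\lambda):=\sum_{i=1}^m\lambda_i\nabla f_i(x)$, so that $s(x)=\min_{\lambda\in\Deltam}\|g(x,\lambda)\|$.

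The key estimate is uniform in $\lambda$. For any $x,y\in\R^n$ and any $\lambda\in\Deltam$, the triangle inequality together with $\lambda_i\ge0$ and $\sum_i\lambda_i=1$ gives
\[
\big|\,\|g(x,\lambda)\|-\|g(y,\lambda)\|\,\big|\le \|g(x,\lambda)-g(y,\lambda)\|\le \sum_{i=1}^m\lambda_i\|\nabla f_i(x)-\nabla f_i(y)\|\le \omega(x,y),
\]
where $\omega(x,y):=\max_{i\in[m]}\|\nabla f_i(x)-\nabla f_i(y)\|$.

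Next we transfer this bound to the minima. Pick $\lambda^y\in\Lambda(y)$; this set is nonempty, as noted after \eqref{eq:def_s}. Then
\[
s(x)\le\|g(x,\lambda^y)\|\le \|g(y,\lambda^y)\|+\omega(x,y)=s(y)+\omega(x,y).
\]
Exchanging the roles of $x$ and $y$ yields $|s(x)-s(y)|\le\omega(x,y)$.

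Finally, fix $x$ and let $y\to x$. Each $f_i$ is $C^2$, so each $\nabla f_i$ is continuous, and since there are finitely many indices, $\omega(x,y)\to0$. Hence $s(y)\to s(x)$, which proves continuity.

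There is no real obstacle. The only point needing care is that the bound must be uniform over $\lambda$, which the simplex constraint provides, and that a minimizer exists, which follows from compactness of $\Deltam$. As a remark, on compact sets the Lipschitz continuity of $\nabla f_i$ would even make $s$ locally Lipschitz, but plain continuity is all that is required here.
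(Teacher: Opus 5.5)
Your proof is correct, but it takes a different route from the paper.

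The paper uses the generic marginal-function argument. It writes $s(x)=\min_{\lambda\in\Deltam}G(x,\lambda)$ with $G$ jointly continuous, and takes a sequence $x^j\to\bar x$. For the $\limsup$ bound it freezes a minimizer $\bar\lambda\in\Lambda(\bar x)$. For the $\liminf$ bound it chooses $\lambda^j\in\Lambda(x^j)$, extracts a convergent subsequence in the compact simplex, and passes to the limit in $G$.

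You instead exploit the specific structure of $G$: it depends on $x$ only through a convex combination of the gradients. This gives the estimate $|s(x)-s(y)|\le\max_{i\in[m]}\|\nabla f_i(x)-\nabla f_i(y)\|$, uniformly in $\lambda$. Your argument needs no subsequence extraction. It uses compactness of $\Deltam$ only for the existence of minimizers, and even that could be replaced by $\varepsilon$-minimizers.

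What your approach buys is a quantitative modulus of continuity. Since each $f_i$ is $C^2$, the gradients are Lipschitz on bounded sets, so $s$ is locally Lipschitz, and globally Lipschitz whenever the gradients are. The paper's argument is purely qualitative, but it is more general: it applies verbatim to any jointly continuous $G$ over a compact parameter set, with no linear structure in $\lambda$ required.
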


\begin{proof}
Let
\[
    G(x,\lambda):=
    \left\|\sum_{i=1}^m\lambda_i\nabla f_i(x)\right\|,
    \quad \forall (x,\lambda)\in\R^n\times\Delta_m .
\]
Then $G$ is continuous and
\[
    s(x)=\min_{\lambda\in\Delta_m}G(x,\lambda).
\]
Let $x^j\to\bar x$. For any $\bar\lambda\in\Lambda(\bar x)$,
\[
    \limsup_{j\to\infty}s(x^j)
    \le
    \lim_{j\to\infty}G(x^j,\bar\lambda)
    =
    G(\bar x,\bar\lambda)
    =
    s(\bar x).
\]
For the reverse inequality, take a subsequence, denoted by $\{x^j\}$ for the sake of simplicity, such that $s(x^j)\to\liminf_j s(x^j)$. Choose $\lambda^j\in\Lambda(x^j)$. Since $\Delta_m$ is compact, passing to a further subsequence if necessary, we may assume that $\lambda^j\to\lambda^\ast\in\Delta_m$. Hence
\[
    \liminf_{j\to\infty}s(x^j)  = \lim_{j\to\infty}G(x^j,\lambda^j) = G(\bar x,\lambda^\ast) \ge s(\bar x).
\]
Therefore $\lim_{j\rightarrow\infty}s(x^j)= s(\bar x)$, and the proof is complete.
\end{proof}

    \section{Multi-objective Regularized Newton Method} \label{sec:algorithm}

    In this section, we introduce the multi-objective regularized Newton method. Recall that the stationarity measure $s(\cdot)$ and the multiplier set $\Lambda(\cdot)$ are defined in \eqref{eq:def_s}.

    For any $x \in \mathbb{R}^{n}$ and direction $d \in \mathbb{R}^{n}$, we approximate the objective functions by the local quadratic models
    \begin{equation} \label{eq:def_qi}
    q_{i}(d; x) := \langle\nabla f_{i}(x), d\rangle + \frac{1}{2}
        d^{\top} \nabla^{2} f_{i}(x) d, \quad \forall i\in [m].
    \end{equation}
    The quantity $q_i(d;x)$ is the second-order model of the increment $f_i(x+d)-f_i(x)$. Since each $f_i$ is convex, $\nabla^2 f_i(x)\succeq0$, and hence $q_i(\cdot;x)$ is convex.


    Based on these definitions, the detailed iterative procedure of the multi-objective regularized Newton method is formally presented in Algorithm \ref{Algo-MRNM}.

    \begin{algorithm}
        [H]
        \caption{Multi-objective Regularized Newton Method}
        \label{Algo-MRNM}
        \begin{algorithmic}
            [1] \Require Initial point $x^{0} \in \mathbb{R}^{n}$. \For{$k=0,1,2,\dots$}
            \State Compute the stationarity measure $s_{k} = s(x^{k})$. \If{$s_{k} = 0$}
            \State \textbf{stop} and return $x^{k}$. \EndIf 
            \State Set the regularization  parameter $\eta_{k} = 2\sqrt{H s_{k}}$.
            \State Compute the search direction $d^{k}$ by solving the subproblem: \State
            \begin{equation}
                \label{eq:def_subproblem}d^{k} = \argmin_{d\in\mathbb{R}^n}\left
                \{ \max_{i\in [m]}q_{i}(d; x^{k}) + \frac{\eta_{k}}{2}\|d\|^{2} \right
                \}
            \end{equation}
            \State Update the iterate: $x^{k+1}= x^{k} + d^{k}$. \EndFor
        \end{algorithmic}
    \end{algorithm}
    \begin{remark}
    {\rm (i)} The subproblem \eqref{eq:def_subproblem} in Algorithm~\ref{Algo-MRNM} is well defined. Indeed, whenever $s_k>0$, the regularization parameter $\eta_k=2\sqrt{Hs_k}$ is positive. Since each $q_i(\cdot;x^k)$ is convex, the function
    \[
        d\mapsto\max_{i\in[m]}q_i(d;x^k)+\frac{\eta_k}{2}\|d\|^2
    \]
    is strongly convex and coercive. Hence the subproblem has a unique minimizer, and the direction $d^k$ is uniquely determined.
    
    \noindent
    {\rm (ii)} When $m=1$, we have $s(x)=\|\nabla f_1(x)\|$, and the subproblem reduces to
    \[
    \min_{d\in\R^n} \left\{ \langle\nabla f_1(x^k),d\rangle +\frac12 d^\top\nabla^2 f_1(x^k)d +\sqrt{H \|\nabla f_1(x^k)\|}\|d\|^2 \right\}.
    \]
    Thus
    \[
    d^k = -\bigl(\nabla^2 f_1(x^k)+2\sqrt{H \|\nabla f_1(x^k)\|} I\bigr)^{-1}\nabla f_1(x^k),
    \]
    which is the square-root regularized Newton step \cite{mishchenko2023regularized}. For $m\ge2$, the method replaces this scalar quadratic model by the max-envelope of the component quadratic models, and therefore computes a common descent direction.
    \end{remark}

   To proceed with the convergence analysis, we denote the step length at iteration $k$ as
    \begin{equation}\label{eq:def_rk}
    r_k := \|d^k\| = \|x^{k+1} - x^k\|.
    \end{equation}
    We impose the following compactness assumption on the initial component-wise lower level set.

\begin{assumption}\label{as:compact}
    The initial level set $\mathcal{L}_0 := \operatorname{Lev}_F(x^0)$ is compact. That is, there exists a constant $D_0 > 0$ such that $\|x - y\| \leq D_0$ for all $x, y \in \mathcal{L}_0$.
\end{assumption}

In the single-objective case, Assumption~\ref{as:compact} reduces to the bounded level set condition used in \cite[Assumption~2.5]{mishchenko2023regularized}, up to the choice of the diameter constant. Thus it is a natural component-wise extension of that assumption to the multiobjective setting.

To measure the progress of the algorithm, we use the merit function proposed by Tanabe et al.~\cite{tanabe24}; see \eqref{eq:def_U}. Although the definition in \eqref{eq:def_U} is taken over the whole space $\R^n$, the descent property of Algorithm~\ref{Algo-MRNM} will be shown to keep the iterates inside the initial component-wise lower level set $\Lzero$. For the complexity analysis, we further need to ensure that $\Ugap$ is finite along the generated trajectory and to relate it to the stationarity measure $s(\cdot)$. The following proposition establishes these properties on the initial component-wise lower level set.

\begin{proposition}\label{thm:merit_properties}
Suppose that Assumption~\ref{as:compact} holds. For any $x\in\Lzero$, the following hold:
\begin{enumerate}
    \item[\rm (i)] $\mathcal{X}_w\cap\Lzero\neq\varnothing$, and
    \[
        \Ugap(x) = \sup_{z\in\mathcal{X}_w\cap\Lzero}  \min_{i\in[m]}\{f_i(x)-f_i(z)\}.
    \]
    In particular, $\Ugap(x)$ is finite.

    \item[\rm (ii)] $\Ugap(x)\le D_0\,s(x)$.
\end{enumerate}
\end{proposition}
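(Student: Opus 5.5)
The plan is to reduce the supremum in \eqref{eq:def_U} to the compact set $\Lzero$, then to weak Pareto points inside it, and finally to bound each term by convexity.

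\emph{Step 1 (restriction to $\Lzero$).} Fix $x\in\Lzero$. If $z$ satisfies $\min_i\{f_i(x)-f_i(z)\}\ge 0$, then $f_i(z)\le f_i(x)\le f_i(x^0)$ for all $i$, so $z\in\Lzero$. The value at $z=x$ is $0$, so $\Ugap(x)\ge 0$. Hence only $z\in\Lzero$ (more precisely, $z\in \operatorname{Lev}_F(x)\subset\Lzero$) can contribute to the supremum, which gives
\[
\Ugap(x)=\sup_{z\in\operatorname{Lev}_F(x)}\min_i\{f_i(x)-f_i(z)\}.
\]
The set $\operatorname{Lev}_F(x)$ is closed and contained in the compact $\Lzero$, hence compact. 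The objective $z\mapsto\min_i\{f_i(x)-f_i(z)\}$ is continuous, so the supremum is attained and finite.

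\emph{Step 2 (maximizers can be taken weakly Pareto).} For nonemptiness of $\mathcal X_w\cap\Lzero$, minimize $\sum_i f_i$ over the compact set $\Lzero$; a minimizer exists. If some $y$ had $F(y)<F(z^\ast)$, then $y\in\Lzero$ and $\sum f_i(y)<\sum f_i(z^\ast)$, a contradiction. So $z^\ast\in\mathcal X_w\cap\Lzero$.

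For the formula, take a maximizer $\bar z\in\operatorname{Lev}_F(x)$ from Step 1. Minimize $\sum_i f_i$ over the compact set $\operatorname{Lev}_F(\bar z)\subset\Lzero$ to get $\hat z$ with $F(\hat z)\le F(\bar z)$. As above, $\hat z$ is weakly Pareto: any $y$ with $F(y)<F(\hat z)$ lies in $\operatorname{Lev}_F(\bar z)$ and has a smaller sum. Moreover $\min_i\{f_i(x)-f_i(\hat z)\}\ge\min_i\{f_i(x)-f_i(\bar z)\}=\Ugap(x)$. Therefore the supremum over $\mathcal X_w\cap\Lzero$ is at least $\Ugap(x)$. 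It is trivially at most $\Ugap(x)$, since it is a supremum over a subset. This proves (i).

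\emph{Step 3 (bound (ii)).} Let $\lambda\in\Lambda(x)$ and $g=\sum_i\lambda_i\nabla f_i(x)$, so $\|g\|=s(x)$. For $z\in\Lzero$, convexity of each $f_i$ gives
\[
\min_i\{f_i(x)-f_i(z)\}\le\sum_i\lambda_i(f_i(x)-f_i(z))\le\sum_i\lambda_i\langle\nabla f_i(x),x-z\rangle=\langle g,x-z\rangle\le s(x)\|x-z\|\le D_0 s(x),
\]
using $x,z\in\Lzero$ and Assumption~\ref{as:compact}. Taking the supremum over $z\in\Lzero$ and using Step 1 yields $\Ugap(x)\le D_0s(x)$.

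The main obstacle is Step 2: one must ensure the maximizing $z$ can be replaced by a \emph{weakly Pareto} point without decreasing the value. The trick is the monotonicity of $z\mapsto\min_i\{f_i(x)-f_i(z)\}$ under componentwise decrease of $F(z)$, combined with the weighted-sum minimization over a compact lower level set.
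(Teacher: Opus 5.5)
Your proof is correct and follows essentially the paper's argument: you restrict the supremum to a compact lower level set, replace a point $z$ by a weakly Pareto point $\hat z$ with $F(\hat z)\le F(z)$ obtained by minimizing $\sum_i f_i$ over $\operatorname{Lev}_F(z)$ (using that $\min_i\{f_i(x)-f_i(z)\}$ is monotone under componentwise decrease of $F(z)$), and bound via convexity with a minimal-norm multiplier. The differences are cosmetic. You restrict to $\operatorname{Lev}_F(x)$ and use attainment of the supremum to do the replacement only at a maximizer, whereas the paper does it for every $z\in\Lzero$ without needing attainment. In (ii) you take the supremum over $\Lzero$ rather than over $\mathcal{X}_w\cap\Lzero$.
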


\begin{proof}
    {\rm (i)} For any $z\in\Lzero$, define
    \[
    \mathcal S_z:=\{u\in\Lzero:\ f_i(u)\le f_i(z),\ i\in[m]\}.
    \]
    Then $\mathcal S_z$ is nonempty and compact. Let $\hat z$ minimize $\Phi(u):=\sum_{i=1}^m f_i(u)$ over $\mathcal S_z$. We claim that $\hat z\in\mathcal{X}_w$. Otherwise, there exists $y\in\R^n$ such that $F(y)<F(\hat z)$. Since $\hat z\in\mathcal S_z\subseteq\Lzero$, we have
    \[
    F(y)<F(\hat z)\le F(z)\le F(x^0),
    \]
    and hence $y\in\mathcal S_z$. This gives $\Phi(y)<\Phi(\hat z)$, contradicting the minimality of $\hat z$. Thus $\hat z\in\mathcal{X}_w\cap\Lzero$ and $F(\hat z)\le F(z)$. Taking $z=x^0$ shows that $\mathcal{X}_w\cap\Lzero\neq\varnothing$.

Now fix $x\in\Lzero$ and write
\[
    \psi_x(z):=\min_{i\in[m]}\{f_i(x)-f_i(z)\}.
\]
If $z\notin\Lzero$, then for some $j\in[m]$,
$f_j(z)>f_j(x^0)\ge f_j(x)$, and therefore
\[
    \psi_x(z)\le f_j(x)-f_j(z)<0.
\]
On the other hand, $\psi_x(x)=0$. Hence points outside $\Lzero$ cannot increase
the supremum in \eqref{eq:def_U}, and
\[
    \Ugap(x)=\sup_{z\in\Lzero}\psi_x(z).
\]
For every $z\in\Lzero$, the observation above gives
$\hat z\in\mathcal{X}_w\cap\Lzero$ with $F(\hat z)\le F(z)$. Hence, for every $i\in[m]$,
\[
    f_i(x)-f_i(z)\le f_i(x)-f_i(\hat z).
\]
Taking the minimum over $i\in[m]$ gives
\[
    \psi_x(z)\le \psi_x(\hat z) \le \sup_{\xi\in\mathcal{X}_w\cap\Lzero}\psi_x(\xi).
\]
Taking the supremum over $z\in\Lzero$ yields
\[
    \sup_{z\in\Lzero}\psi_x(z) \le \sup_{\xi\in\mathcal{X}_w\cap\Lzero}\psi_x(\xi).
\]
The reverse inequality follows from $\mathcal{X}_w\cap\Lzero\subseteq\Lzero$. Hence
\[
    \Ugap(x) = \sup_{z\in\mathcal{X}_w\cap\Lzero} \min_{i\in[m]}\{f_i(x)-f_i(z)\}.
\]
Since $\Lzero$ is compact and the component functions are continuous,
$\Ugap(x)$ is finite.

    \noindent
    {\rm (ii)} Fix $x \in \mathcal{L}_0$, and choose a multiplier $\omega \in \Lambda(x)$. By definition, $\left\|\sum_{i=1}^m \omega_i\nabla f_i(x)\right\| = s(x)$. Let $z \in \mathcal{X}_w \cap \mathcal{L}_0$. Utilizing the convexity of $f_i$, we obtain
    \[
        f_i(x) - f_i(z) \le \langle \nabla f_i(x), x - z\rangle, \quad \forall i \in [m].
    \]
    Since $\omega \in \Deltam$, we can bound the minimum difference as follows:
    \[
        \min_{i \in [m]} \{f_i(x) - f_i(z)\} \le \sum_{i=1}^m \omega_i \bigl(f_i(x) - f_i(z)\bigr) \le \left\langle \sum_{i=1}^m \omega_i\nabla f_i(x), x - z \right\rangle \le s(x)\|x - z\|.
    \]
    Because both $x, z \in \mathcal{L}_0$, Assumption \ref{as:compact} guarantees that $\|x - z\| \le D_0$. Taking the supremum over all $z \in \mathcal{X}_w \cap \mathcal{L}_0$ and using part (i) yields $\mathcal{U}(x) \le D_0\, s(x)$, completing the proof.
\end{proof}

The next lemma records the optimality condition of the subproblem in Algorithm~\ref{Algo-MRNM}; see, e.g., \cite[Section~3]{fliege09}.

\begin{lemma}\label{lem:kkt}
Let $d^k$ be the solution of \eqref{eq:def_subproblem}, and define the active index set
\[
    A_k:=\argmax_{i\in[m]} q_i(d^k;x^k).
\]
Then there exists $\alpha^k=(\alpha_1^k,\ldots,\alpha_m^k)\in\Deltam$, supported on $A_k$, such that
\begin{equation}\label{eq:kkt}
    0 = \sum_{i=1}^m \alpha_i^k \bigl(\nabla f_i(x^k)+\nabla^2 f_i(x^k)d^k\bigr) +\eta_k d^k .
\end{equation}
\end{lemma}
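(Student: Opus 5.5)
The plan is to rewrite the subproblem \eqref{eq:def_subproblem} as a smooth convex program with an epigraph variable and then read off its KKT conditions. Fix $k$ with $s_k>0$, so that $\eta_k>0$. Consider
\[
\min_{(d,t)\in\R^n\times\R}\ t+\frac{\eta_k}{2}\|d\|^2
\quad\text{s.t.}\quad q_i(d;x^k)\le t,\ i\in[m].
\]
Each constraint function $(d,t)\mapsto q_i(d;x^k)-t$ is convex and continuously differentiable, because $\nabla^2 f_i(x^k)\succeq0$. The objective is convex and differentiable as well. The pair $(d^k,t_k)$ with $t_k:=\max_i q_i(d^k;x^k)$ solves this program, since for any feasible $(d,t)$ we have $t\ge\max_i q_i(d;x^k)$.

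Slater's condition holds trivially: take $d=0$ and $t=1$, so that $q_i(0;x^k)=0<1$ for every $i$. Convex programming duality therefore provides multipliers $\alpha_i^k\ge0$ satisfying stationarity and complementary slackness. Stationarity in $t$ gives $1-\sum_i\alpha_i^k=0$, hence $\alpha^k\in\Deltam$. Stationarity in $d$ gives
\[
\eta_k d^k+\sum_i\alpha_i^k\nabla_d q_i(d^k;x^k)=0,
\]
where $\nabla_d q_i(d;x^k)=\nabla f_i(x^k)+\nabla^2 f_i(x^k)d$. This is exactly \eqref{eq:kkt}. Complementary slackness reads $\alpha_i^k\bigl(q_i(d^k;x^k)-t_k\bigr)=0$, so $\alpha_i^k=0$ whenever $i\notin A_k$; that is, $\alpha^k$ is supported on $A_k$.

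An equivalent nonsmooth route, which I would mention as a check, goes through subdifferential calculus. The subdifferential of $\max_i q_i(\cdot;x^k)$ at $d^k$ is $\operatorname{conv}\{\nabla_d q_i(d^k;x^k):i\in A_k\}$ (Danskin's formula for a finite max of smooth convex functions). Adding the gradient $\eta_k d^k$ of the smooth regularizer and applying Fermat's rule $0\in\partial(\cdot)(d^k)$ yields the same conclusion.

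No step is a real obstacle. The only points needing care are verifying a constraint qualification, which is immediate from Slater's condition, and noting that $s_k>0$ is assumed, so that $d^k$ is well defined by the preceding remark.
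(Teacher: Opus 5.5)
Your proof is correct and takes essentially the same approach as the paper, which gives no argument of its own but cites Section~3 of Fliege, Gra\~na Drummond, and Svaiter, where the same epigraph reformulation is used: Slater's condition gives KKT multipliers, stationarity in $t$ makes them sum to one, and complementary slackness makes them vanish off the active set, yielding \eqref{eq:kkt}. Your nonsmooth cross-check, via the subdifferential of a finite max of smooth convex functions together with Fermat's rule, is an equally valid route to the same conclusion.
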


The following lemma provides basic estimates for one iteration of Algorithm~\ref{Algo-MRNM}. They connect the step length $r_k$, the regularization parameter $\eta_k$, and the stationarity measures $s_k$ and $s_{k+1}$.

    \begin{lemma}
        \label{lem:big_enough} Let $\{x^k\}_{k\in\mathbb N}$ be generated by Algorithm~\ref{Algo-MRNM}. Then, for every $k$ with $s_k>0$, the following estimates hold.
        \begin{align}
            \eta_{k} r_{k} & \le 2 s_{k}, \label{eq:eta_r_bound}                                       \\
            H r_{k}        & \le \frac{\eta_{k}}{2}, \label{eq:Hr_bound}                               \\
            s_{k+1}        & \le \frac{5}{4}\eta_{k} r_{k} \le \frac{5}{2}s_{k}. \label{eq:sk1_growth}
        \end{align}
    \end{lemma}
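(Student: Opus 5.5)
The plan is to work from the KKT identity \eqref{eq:kkt} of Lemma~\ref{lem:kkt}. Write $g_k:=\sum_i\alpha_i^k\nabla f_i(x^k)$ and $B_k:=\sum_i\alpha_i^k\nabla^2 f_i(x^k)\succeq0$. Then \eqref{eq:kkt} reads $(B_k+\eta_k I)d^k=-g_k$.

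\textbf{Proof of \eqref{eq:eta_r_bound}.} Take the inner product with $d^k$ and use $B_k\succeq0$ to get
\[
\eta_k r_k^2\le\langle B_kd^k,d^k\rangle+\eta_k r_k^2=-\langle g_k,d^k\rangle\le\|g_k\|r_k .
\]
This gives $\eta_k r_k\le\|g_k\|$. It does not yet give $2s_k$, because $\alpha^k$ is not the minimizing multiplier, so $\|g_k\|$ may exceed $s_k$. I would therefore avoid $\|g_k\|$ and compare objective values in the subproblem instead. Let $\omega\in\Lambda(x^k)$ and set $\phi(d):=\max_i q_i(d;x^k)+\frac{\eta_k}{2}\|d\|^2$. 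The function $\phi$ is $\eta_k$-strongly convex with minimizer $d^k$, so
\[
\phi(d)\ge\phi(d^k)+\tfrac{\eta_k}{2}\|d-d^k\|^2 .
\]
Apply this at $d=0$, where $\phi(0)=0$, to obtain $\phi(d^k)\le-\frac{\eta_k}{2}r_k^2$. Next bound $\phi(d^k)$ from below using the weights $\omega$:
\[
\phi(d^k)\ge\sum_i\omega_i q_i(d^k;x^k)+\tfrac{\eta_k}{2}r_k^2\ge-s_kr_k+\tfrac{\eta_k}{2}r_k^2,
\]
since the weighted Hessian term is nonnegative. Combining the two bounds gives $\eta_k r_k^2\le s_kr_k$, that is, $\eta_k r_k\le s_k$. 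This is even stronger than \eqref{eq:eta_r_bound}, so I expect that step to work.

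\textbf{Proof of \eqref{eq:Hr_bound}.} Multiply by $\eta_k$: $H\eta_k r_k\le Hs_k=\eta_k^2/4$. Hence $Hr_k\le\eta_k/4\le\eta_k/2$. The factor $2$ in $\eta_k=2\sqrt{Hs_k}$ is exactly what makes this work.

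\textbf{Proof of \eqref{eq:sk1_growth}.} Here $s_{k+1}\le\|\sum_i\alpha_i^k\nabla f_i(x^{k+1})\|$ because $\alpha^k\in\Deltam$. By \eqref{eq:grad_remainder} and \eqref{eq:kkt},
\[
\sum_i\alpha_i^k\nabla f_i(x^{k+1})=\sum_i\alpha_i^k\bigl(\nabla f_i(x^{k+1})-\nabla f_i(x^k)-\nabla^2f_i(x^k)d^k\bigr)-\eta_kd^k .
\]
Its norm is at most $\frac H2r_k^2+\eta_kr_k$. Using \eqref{eq:Hr_bound}, $\frac H2 r_k^2\le\frac{\eta_k}{4}r_k$, so $s_{k+1}\le\frac54\eta_kr_k$. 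Finally, \eqref{eq:eta_r_bound} gives $\frac54\eta_k r_k\le\frac52 s_k$.

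The main obstacle I anticipate is the first estimate. The naive bound through the KKT multiplier $\alpha^k$ controls $\|g_k\|$ rather than $s_k$, and the fix is to compare subproblem values against the stationarity-optimal multiplier $\omega$ via strong convexity. If that comparison had a gap, the weaker constant $2$ in \eqref{eq:eta_r_bound} leaves room, but the chain above appears to yield it directly.
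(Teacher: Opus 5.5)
Your proposal is correct and follows the paper's proof. For \eqref{eq:eta_r_bound} you bound the subproblem value at $d^k$ from above by comparison with $d=0$ and from below via a minimal-norm multiplier $\omega\in\Lambda(x^k)$; for \eqref{eq:sk1_growth} you use the KKT multiplier $\alpha^k$ with \eqref{eq:grad_remainder} and \eqref{eq:kkt}, exactly as the paper does. The one difference is that you invoke $\eta_k$-strong convexity to get $\phi(d^k)\le-\frac{\eta_k}{2}r_k^2$ rather than the paper's plain $\phi(d^k)\le\phi(0)=0$; this validly sharpens \eqref{eq:eta_r_bound} to $\eta_k r_k\le s_k$ and \eqref{eq:Hr_bound} to $Hr_k\le\eta_k/4$, tighter constants the paper does not need.
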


    \begin{proof}
        Since $d=0$ is feasible in \eqref{eq:def_subproblem}, the optimality yields
        \begin{equation}
            \label{eq:qi_ub}\max_{i} q_{i}(d^{k};x^{k}) + \frac{\eta_{k}}{2}\|d^{k}
            \|^{2} \le 0.
        \end{equation}
        By convexity, $\nabla^2 f_i(x^k)\succeq0$, and hence
        \[
        q_i(d^k;x^k)\ge \langle\nabla f_i(x^k),d^k\rangle, \quad i\in[m].
        \]
        Pick $\omega_{k} \in \Lambda(x^{k}).$ Therefore,
        \begin{align*}
            0 & \ge \max_{i} q_{i}(d^{k};x^{k}) + \frac{\eta_{k}}{2}\|d^{k}\|^{2} \ge \max_{i} \langle\nabla f_{i}(x^{k}),d^{k}\rangle + \frac{\eta_{k}}{2}r_{k}^{2}                                                                                                                   \\
              & \ge \sum_{i=1}^{m} \omega_{i}^{k}\langle\nabla f_{i}(x^{k}),d^{k}\rangle + \frac{\eta_{k}}{2}r_{k}^{2} = \left\langle\sum_{i=1}^{m} \omega_{i}^{k}\nabla f_{i}(x^{k}),d^{k}\right\rangle + \frac{\eta_{k}}{2}r_{k}^{2} \ge -s_{k} r_{k} + \frac{\eta_{k}}{2}r_{k}^{2},
        \end{align*}
        where the last inequality follows from the definition of $s_{k}.$ This
        proves \eqref{eq:eta_r_bound}.

        Since $\eta_{k}=2\sqrt{H s_{k}}$, we have $\eta_{k}^{2}=4Hs_{k}$. Multiplying
        \eqref{eq:eta_r_bound} by $H/\eta_{k}$ gives
        \[
            Hr_{k} \le \frac{2Hs_{k}}{\eta_{k}}= \frac{\eta_{k}}{2},
        \]
        which establishes \eqref{eq:Hr_bound}.

        It remains to bound $s_{k+1}$. Let $\alpha^k=(\alpha_1^k,\ldots,\alpha_m^k)\in\Deltam$ be the multiplier vector given by Lemma~\ref{lem:kkt}. Since $\alpha^k$ is feasible in the definition of $s(x^{k+1})$, we have
        \[
        s_{k+1} \le \left\| \sum_{i=1}^m \alpha_i^k\nabla f_i(x^{k+1})  \right\|,
        \]
        which by the triangle inequality implies
        \begin{align*}
            s_{k+1} \le \left\|\sum_{i=1}^{m} \alpha_{i}^{k}\bigl(\nabla f_{i}(x^{k+1}) - \nabla f_{i}(x^{k}) - \nabla^{2} f_{i}(x^{k})d^{k}\bigr)\right\| + \left\|\sum_{i=1}^{m} \alpha_{i}^{k}\bigl(\nabla f_{i}(x^{k}) + \nabla^{2} f_{i}(x^{k})d^{k}\bigr)\right\|.
        \end{align*}
        By Lemma~\ref{lem:upperbound} and \eqref{eq:kkt}, we obtain
        \[
            s_{k+1}\le \frac{H}{2}r_{k}^{2} + \eta_{k} r_{k}.
        \]
        Using \eqref{eq:Hr_bound}, we get $\frac{H}{2}r_{k}^{2} \le \frac{\eta_{k}}{4}
        r_{k}$, hence
        \[
            s_{k+1}\le \frac{5}{4}\eta_{k} r_{k}.
        \]
        Combining this with \eqref{eq:eta_r_bound} proves the second inequality in
        \eqref{eq:sk1_growth}.
    \end{proof}

    The estimates above imply a component-wise descent property, which keeps the iterates in the initial level set and yields a one-step decrease of the merit function.

    \begin{lemma}\label{lem:component-wise}
Suppose Assumption~\ref{as:compact} holds, and let $\{x^k\}_{k\in\mathbb N}$ be
generated by Algorithm~\ref{Algo-MRNM}. Then, for every nonterminal iteration
$k$ with $s_k>0$, the following statements hold:
\begin{itemize}
    \item[\rm (i)] For every $i\in[m]$,
    \[
        f_i(x^{k+1})
        \le
        f_i(x^k)-\frac{5}{12}\eta_k r_k^2.
    \]
    Consequently, $F(x^{k+1})\le F(x^k)$, and hence $x^k\in\Lzero$ for all
    generated iterates.

    \item[\rm (ii)] Let $u_k:=\Ugap(x^k)$. Then
    \[
        u_{k+1}
        \le
        u_k-\frac{5}{12}\eta_k r_k^2.
    \]
\end{itemize}
\end{lemma}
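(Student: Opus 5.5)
Part (i) follows from the cubic upper bound in Lemma~\ref{lem:upperbound} applied with $x=x^k$ and $d=d^k$. For each $i\in[m]$ this gives
\[
f_i(x^{k+1})\le f_i(x^k)+q_i(d^k;x^k)+\tfrac{H}{6}r_k^3 .
\]
Bound the model term using the optimality comparison with $d=0$, namely \eqref{eq:qi_ub}: it gives $q_i(d^k;x^k)\le \max_j q_j(d^k;x^k)\le -\tfrac{\eta_k}{2}r_k^2$. For the cubic term, \eqref{eq:Hr_bound} gives $Hr_k\le \eta_k/2$, so $\tfrac{H}{6}r_k^3\le \tfrac{\eta_k}{12}r_k^2$. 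Adding the two bounds,
\[
f_i(x^{k+1})\le f_i(x^k)-\Bigl(\tfrac12-\tfrac1{12}\Bigr)\eta_k r_k^2=f_i(x^k)-\tfrac{5}{12}\eta_k r_k^2 .
\]
Since $\eta_k r_k^2\ge0$, we get $F(x^{k+1})\le F(x^k)$ componentwise. An induction starting from $x^0\in\Lzero$ then gives $F(x^k)\le F(x^0)$, that is, $x^k\in\Lzero$ for every generated iterate. If the algorithm terminates, the finitely many iterates produced are covered by the same induction.

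For part (ii), fix any $z\in\R^n$ and let $\delta_k:=\tfrac{5}{12}\eta_k r_k^2$. By part (i), $f_i(x^{k+1})-f_i(z)\le f_i(x^k)-f_i(z)-\delta_k$ for every $i$. Because $\delta_k$ is the same for all components, taking the minimum over $i$ preserves it:
\[
\min_i\{f_i(x^{k+1})-f_i(z)\}\le \min_i\{f_i(x^k)-f_i(z)\}-\delta_k\le u_k-\delta_k .
\]
Taking the supremum over $z$ gives $u_{k+1}\le u_k-\delta_k$. Both $u_k$ and $u_{k+1}$ are finite by Proposition~\ref{thm:merit_properties}(i), since $x^k,x^{k+1}\in\Lzero$ by part (i), so the inequality is meaningful.

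The only delicate point is the constant in (i): the bound \eqref{eq:Hr_bound} must be used in the form $Hr_k\le\eta_k/2$ so that the cubic remainder absorbs exactly $\tfrac1{12}\eta_k r_k^2$. A second point is that the descent has to be uniform across components, so that it passes through the min–sup structure of $\Ugap$. Both are already supplied by Lemma~\ref{lem:big_enough} and \eqref{eq:qi_ub}, so I expect no real obstacle.
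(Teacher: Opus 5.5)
Your proposal is correct and matches the paper's proof essentially step for step. For (i) you combine the cubic upper bound with the comparison against $d=0$ and the estimate $Hr_k\le\eta_k/2$, and for (ii) you push the uniform-in-$i$ decrease through the minimum and then the supremum over $z$. Your added remark that $u_k,u_{k+1}$ are finite by Proposition~\ref{thm:merit_properties}(i) is a harmless extra that the paper leaves implicit.
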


\begin{proof}
{\rm (i)} Fix $i\in[m]$. Since $x^{k+1}=x^k+d^k$, Lemma~\ref{lem:upperbound}
gives
\[
    f_i(x^{k+1}) \le f_i(x^k)+q_i(d^k;x^k)+\frac{H}{6}r_k^3.
\]
Moreover, since $d=0$ is feasible in \eqref{eq:def_subproblem}, the optimality
of $d^k$ yields
\[
    \max_{j\in[m]}q_j(d^k;x^k)+\frac{\eta_k}{2}r_k^2\le0.
\]
Thus we get
\[
    q_i(d^k;x^k) \le \max_{j\in[m]}q_j(d^k;x^k) \le -\frac{\eta_k}{2}r_k^2.
\]
Combining the last two estimates, we obtain
\[
    f_i(x^{k+1}) \le f_i(x^k)-\frac{\eta_k}{2}r_k^2+\frac{H}{6}r_k^3.
\]
By \eqref{eq:Hr_bound}, $Hr_k\le\eta_k/2$, and hence
\[
    \frac{H}{6}r_k^3 \le \frac{\eta_k}{12}r_k^2.
\]
Therefore, it holds that
\[
    f_i(x^{k+1}) \le f_i(x^k)-\frac{5}{12}\eta_k r_k^2.
\]
This proves the component-wise descent $F(x^{k+1})\leq F(x^k)$. Since $x^0\in\Lzero$, the inclusion $x^k\in\Lzero$ for all generated iterates follows by induction.

\noindent
{\rm (ii)} Fix any $z\in\R^n$. By part {\rm (i)}, for every $i\in[m]$,
\[
    f_i(x^{k+1})-f_i(z)
    \le
    f_i(x^k)-f_i(z)-\frac{5}{12}\eta_k r_k^2.
\]
Taking the minimum over $i\in[m]$ gives
\[
    \min_{i\in[m]}\{f_i(x^{k+1})-f_i(z)\}
    \le
    \min_{i\in[m]}\{f_i(x^k)-f_i(z)\}
    -\frac{5}{12}\eta_k r_k^2.
\]
Since this inequality holds for every $z\in\R^n$, the definition of $\Ugap$ implies
\[
    u_{k+1}
    \le
    u_k-\frac{5}{12}\eta_k r_k^2.
\]
\end{proof}
    
 We now distinguish iterations according to the behavior of the stationarity measure. The following set collects the iterations at which $s_{k+1}$ does not decrease too much relative to $s_k$:
\begin{equation}\label{eq:def_good}
    \mathcal I_\infty
    :=
    \left\{k\in\mathbb N:\ s_{k+1}\ge \frac14 s_k\right\}.
\end{equation}
For each $k\ge1$, define the truncated set
\begin{equation}\label{eq:def_Ik}
    \mathcal I_k
    :=
    \mathcal I_\infty\cap\{0,1,\ldots,k-1\}.
\end{equation}
The next lemma shows that every iteration in $\mathcal I_\infty$ produces a quantitative decrease of the merit function.

\begin{lemma}\label{lem:good_decrease}
Suppose Assumption~\ref{as:compact} holds. If $k\in\mathcal I_\infty$, then
\begin{align}
    \frac{5}{12}\eta_k r_k^2
    &\ge
    \frac{s_k^{3/2}}{120\sqrt H}, \label{eq:I_step_stationarity_decrease}\\
    u_{k+1}
    \le
    u_k-\tau & u_k^{3/2},
    \quad
    \tau:=\frac{1}{120D_0^{3/2}\sqrt H}. \label{eq:good_step_merit}
\end{align}
\end{lemma}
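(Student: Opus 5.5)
For \(k\in\mathcal I_\infty\) the plan is to combine the bound \(s_{k+1}\le\frac54\eta_k r_k\) from \eqref{eq:sk1_growth} with the defining inequality \(s_{k+1}\ge\frac14 s_k\). Together they give \(\eta_k r_k\ge \frac45 s_{k+1}\ge \frac15 s_k\), so
\[
    r_k\ge \frac{s_k}{5\eta_k}=\frac{s_k}{10\sqrt{H s_k}}=\frac{\sqrt{s_k}}{10\sqrt H}.
\]
Squaring and multiplying by \(\frac{5}{12}\eta_k=\frac{5}{6}\sqrt{Hs_k}\), we obtain
\[
    \frac{5}{12}\eta_k r_k^2\ge \frac56\sqrt{Hs_k}\cdot\frac{s_k}{100H}=\frac{s_k^{3/2}}{120\sqrt H},
\]
which is exactly \eqref{eq:I_step_stationarity_decrease}. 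The constants match, which indicates this is the intended route.

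For \eqref{eq:good_step_merit}, Lemma~\ref{lem:component-wise}(i) places all iterates in \(\Lzero\). Proposition~\ref{thm:merit_properties}(ii) then gives \(u_k\le D_0 s_k\), equivalently \(s_k\ge u_k/D_0\). Since \(u_k\ge0\), this yields \(s_k^{3/2}\ge u_k^{3/2}/D_0^{3/2}\). Inserting this into \eqref{eq:I_step_stationarity_decrease} and then using Lemma~\ref{lem:component-wise}(ii) gives
\[
    u_{k+1}\le u_k-\frac{5}{12}\eta_k r_k^2\le u_k-\frac{u_k^{3/2}}{120D_0^{3/2}\sqrt H}=u_k-\tau u_k^{3/2}.
\]

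The argument needs \(s_k>0\), so that \(\eta_k>0\) and the division is legitimate. This holds at every nonterminal iteration, which is implicit in the statement since \(k\) indexes a generated step. Apart from this, the only delicate point is the direction of the inequalities: we need a lower bound on \(r_k\), and it comes solely from the upper bound on \(s_{k+1}\) in terms of \(\eta_k r_k\) paired with the membership condition defining \(\mathcal I_\infty\). Once that is in place, the rest is direct substitution.
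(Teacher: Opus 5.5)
Your proposal is correct and follows essentially the same route as the paper. You combine $s_{k+1}\le\frac54\eta_k r_k$ with $s_{k+1}\ge\frac14 s_k$ to get $\eta_k r_k\ge s_k/5$ and $r_k\ge\sqrt{s_k}/(10\sqrt H)$, and then pass to the merit decrease via $u_k\le D_0 s_k$ and Lemma~\ref{lem:component-wise}(ii). The only difference is cosmetic: you square the bound on $r_k$, whereas the paper writes $\frac{5}{12}\eta_k r_k^2=\frac{5}{12}(\eta_k r_k)\,r_k$.
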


\begin{proof}
Let $k\in\mathcal I_\infty$. By the definition of $\mathcal I_\infty$ and
\eqref{eq:sk1_growth},
\[
    \frac14 s_k
    \le
    s_{k+1}
    \le
    \frac54\eta_k r_k.
\]
Hence
\[
    \eta_k r_k\ge \frac15 s_k.
\]
Since $\eta_k=2\sqrt{Hs_k}$, we also obtain
\[
    r_k
    \ge
    \frac{s_k}{5\eta_k}
    =
    \frac{\sqrt{s_k}}{10\sqrt H}.
\]
Combining the two bounds gives
\[
    \frac{5}{12}\eta_k r_k^2
    =
    \frac{5}{12}(\eta_k r_k)r_k
    \ge
    \frac{5}{12}\cdot\frac{s_k}{5}
    \cdot
    \frac{\sqrt{s_k}}{10\sqrt H}
    =
    \frac{s_k^{3/2}}{120\sqrt H}.
\]
This proves \eqref{eq:I_step_stationarity_decrease}.

By Proposition~\ref{thm:merit_properties}{\rm (ii)}, $u_k\le D_0s_k$, and therefore
\[
    s_k^{3/2}
    \ge
    \frac{u_k^{3/2}}{D_0^{3/2}}.
\]
Consequently,
\[
    \frac{5}{12}\eta_k r_k^2
    \ge
    \frac{u_k^{3/2}}{120D_0^{3/2}\sqrt H}
    =
    \tau u_k^{3/2}.
\]
Combining this estimate with Lemma~\ref{lem:component-wise}{\rm (ii)} gives
\[
    u_{k+1}
    \le
    u_k-\tau u_k^{3/2},
\]
which proves \eqref{eq:good_step_merit}.
\end{proof}

We now turn to the asymptotic merit complexity of Algorithm~\ref{Algo-MRNM}. The following localization lemma is the key step in the proof of the global little-$o$ rate.

\begin{lemma}\label{lem:U_little_o_s}
Suppose that Assumption~\ref{as:compact} holds. If the algorithm does not terminate finitely, then
\[
    \mathcal U(x^k)=o(s_k).
\]
\end{lemma}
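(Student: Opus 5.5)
The plan is to argue by contradiction using compactness of $\Lzero$, sharpening the estimate in Proposition~\ref{thm:merit_properties}{\rm (ii)}. That estimate gives $\Ugap(x^k)\le \langle g_k, x^k-z\rangle$ up to a supremum over $z$, where $g_k:=\sum_i\omega_i^k\nabla f_i(x^k)$, $\omega^k\in\Lambda(x^k)$ and $\|g_k\|=s_k$. We will show that the \emph{normalized} quantity $\langle g_k/s_k, x^k-z\rangle$ must tend to zero over the relevant $z$.

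\textbf{Step 1.} Show $s_k\to0$ and $u_k\to0$.

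\begin{itemize}
\item By Lemma~\ref{lem:component-wise}{\rm (i)}, $\sum_k \eta_k r_k^2<\infty$. Together with \eqref{eq:I_step_stationarity_decrease}, this gives $s_k\to0$ along $\mathcal I_\infty$.
\item Off $\mathcal I_\infty$ we have $s_{k+1}<s_k/4$.
\item By \eqref{eq:sk1_growth}, $s$ grows by at most the factor $5/2$ in one step. Hence, if $\mathcal I_\infty$ is finite, $s_k\to0$ geometrically. If $\mathcal I_\infty$ is infinite, a run-length argument gives $s_k\to0$: between two indices of $\mathcal I_\infty$, $s$ only decreases, and each index of $\mathcal I_\infty$ multiplies $s$ by at most $5/2$.
\item Proposition~\ref{thm:merit_properties}{\rm (ii)} then gives $u_k\to0$.
\end{itemize}

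\textbf{Step 2.} Set up the contradiction.

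\begin{itemize}
\item Suppose there are $c>0$ and a subsequence $K$ with $u_k\ge c\,s_k$ for $k\in K$.
\item By Proposition~\ref{thm:merit_properties}{\rm (i)}, for each $k\in K$ pick $z^k\in\mathcal X_w\cap\Lzero$ with $\min_i\{f_i(x^k)-f_i(z^k)\}\ge \tfrac c2 s_k>0$. In particular $F(z^k)\le F(x^k)$.
\item Convexity and $\omega^k\in\Deltam$ give, as in the proof of Proposition~\ref{thm:merit_properties}{\rm (ii)}, $\langle g_k/s_k,\,x^k-z^k\rangle\ge c/2$. Moreover $\langle \nabla f_j(x^k),x^k-z^k\rangle\ge f_j(x^k)-f_j(z^k)>0$ for every $j$.
\item By compactness of $\Lzero$ and $\Deltam$, pass to a further subsequence along which
\[
x^k\to\bar x,\qquad z^k\to\bar z,\qquad \omega^k\to\bar\omega,\qquad g_k/s_k\to e,\quad \|e\|=1 .
\]
\item Since the values $F(x^k)$ are monotone, $F(\bar z)\le F(\bar x)=F^\star:=\lim_k F(x^k)$. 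Since $s(\bar x)=0$ by Lemma~\ref{lem:s_continuity}, $\sum_i\bar\omega_i\nabla f_i(\bar x)=0$.
\item The limit inequality reads $\langle e,\bar x-\bar z\rangle\ge c/2$, so $\bar z\neq\bar x$.
\end{itemize}

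\textbf{Step 3 (main obstacle).} Contradict this limit.

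Two facts about $g_k$ are available. First, $g_k$ is the minimum-norm element of $\operatorname{conv}\{\nabla f_i(x^k)\}$, so $\langle g_k,\nabla f_j(x^k)\rangle\ge s_k^2$ for all $j$. Second, on $\operatorname{supp}\omega^k$ this holds with equality.

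The route I would try first is to expand each active gradient to first order in $x^k-\bar x$:
\[
\nabla f_j(x^k)=\nabla f_j(\bar x)+\nabla^2 f_j(\bar x)(x^k-\bar x)+O(\|x^k-\bar x\|^2).
\]
Then I would compare the sizes of $s_k$ and $\|x^k-\bar x\|$ to identify $e$ as a direction in the range of $\sum_i\bar\omega_i\nabla^2 f_i(\bar x)$.

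Next, I would use the scalarization $\varphi:=\sum_i\bar\omega_i f_i$, which is convex and minimized at $\bar x$. From $F(\bar z)\le F(\bar x)$ we get $\varphi(\bar z)\le\varphi(\bar x)$, so $\bar z$ also minimizes $\varphi$. Then $\nabla\varphi$ vanishes on the whole segment $[\bar x,\bar z]$, and therefore $\nabla^2\varphi(\bar x)(\bar z-\bar x)=0$. This should force $\langle e,\bar x-\bar z\rangle=0$, contradicting $\ge c/2$.

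The delicate part is justifying that $e$ lies in the range of $\nabla^2\varphi(\bar x)$, or an equivalent orthogonality. This must be done uniformly, handling both the changing active multipliers and the $O(\|x^k-\bar x\|^2)$ Hessian-Lipschitz remainders of Lemma~\ref{lem:upperbound}. The KKT relation \eqref{eq:kkt}, which expresses $\eta_k d^k$ through $\sum_i\alpha_i^k(\nabla f_i(x^k)+\nabla^2f_i(x^k)d^k)$, is the tool I would use to link $g_k$ with the Hessian range. I expect this to be where most of the work lies.
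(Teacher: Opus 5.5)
Your Steps 1 and 2 are correct. Step 1 is even a slightly more direct route to $s_k\to0$ than the paper's, which first shows $u_k\to0$, then that cluster points lie in $\mathcal X_w$, then invokes continuity of $s$. The problem is Step 3. It carries the whole content of the lemma, it is left open, and the mechanism you propose for it cannot work.

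You plan to show that $e=\lim g_k/s_k$ lies in $\operatorname{range}\nabla^2\varphi(\bar x)$, using a first-order Taylor expansion and a comparison of $s_k$ with $\|x^k-\bar x\|$, i.e.\ without using $u_k\ge c\,s_k$. As a statement about the iterates this is false whenever the Hessians degenerate at the limit. The paper's own family \eqref{eq:family}, run by Algorithm~\ref{Algo-MRNM}, is a counterexample:
\begin{itemize}
\item $x^k\downarrow0=\bar x$ and $\Lambda(x^k)=\{(1,0)\}$, so $\bar\omega=(1,0)$ and $\varphi=\psi_p$ with $\varphi''(0)=0$;
\item yet $g_k/s_k=1$ for every $k$, so $e=1\notin\operatorname{range}\varphi''(0)=\{0\}$.
\end{itemize}
There $s_k=(x^k)^p=o(|x^k-\bar x|^2)$ because $p>2$. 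The Hessian-Lipschitz remainder therefore dominates the linear term, and no range information can be extracted. The KKT relation \eqref{eq:kkt} does not rescue this. It involves the subproblem multipliers $\alpha^k$ and the step $d^k$, not the minimum-norm multipliers $\omega^k\in\Lambda(x^k)$ that define $g_k$.

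The missing idea is to compare with $\bar x$ itself, using function values, instead of with $\bar z$ using Hessians. You already have the ingredients: $F(\bar x)=\lim_kF(x^k)\le F(x^k)$ componentwise, and $\bar x$ minimizes $\varphi=\sum_i\bar\omega_if_i$.
\begin{enumerate}
\item Because $\bar x$ minimizes $\varphi$, for every $z\in\R^n$ some $i\in S:=\{i:\bar\omega_i>0\}$ satisfies $f_i(z)\ge f_i(\bar x)$. Hence $u_k\le\max_{i\in S}\{f_i(x^k)-f_i(\bar x)\}$.
\item Along the subsequence, $\omega_i^k\ge\gamma>0$ for $i\in S$, and every difference $f_i(x^k)-f_i(\bar x)$ is nonnegative. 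Convexity then gives
\[
u_k\le\frac1\gamma\sum_{i=1}^m\omega_i^k\bigl(f_i(x^k)-f_i(\bar x)\bigr)\le\frac1\gamma\langle g_k,x^k-\bar x\rangle\le\frac1\gamma\,s_k\|x^k-\bar x\|.
\]
\item Since $\|x^k-\bar x\|\to0$, this contradicts $u_k\ge c\,s_k$.
\end{enumerate}
This is the paper's argument. The points $z^k$ and $\bar z$, the direction $e$, and all second-order information are unnecessary.
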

\begin{proof}
Let
\[
    u_k:=\mathcal U(x^k).
\]
Since the algorithm does not terminate finitely, we have $s_k>0$ for all $k$.
By Lemma~\ref{lem:component-wise}, the sequence $\{u_k\}$ is nonincreasing and
nonnegative. Hence it admits a limit.

We first show that
\begin{equation}\label{eq-lim-uk-zero}
    \lim_{k\rightarrow\infty}u_k =  0.
\end{equation}
If $\mathcal I_\infty$ is finite, then for all sufficiently large $k$ one has
$k\notin\mathcal I_\infty$, and hence
\[
    s_{k+1}<\frac14 s_k.
\]
Thus $s_k\to0$, and Proposition~\ref{thm:merit_properties}(ii) gives $u_k\le D_0s_k\to0$.

If $\mathcal I_\infty$ is infinite, then Lemma~\ref{lem:good_decrease} gives
\[
    u_{k+1}\le u_k-\tau u_k^{3/2},
    \quad k\in\mathcal I_\infty .
\]
If $\lim_k u_k=\bar u>0$, then every sufficiently large $k\in \mathcal{I}_{\infty}$ would decrease $u_k$ by at least $\frac{1}{2}\tau \bar u^{3/2}>0$, which is impossible since
$\{u_k\}_{k\in\mathbb{N}}$ is nonincreasing and bounded below. Therefore $\bar u=0$. Hence, in
all cases, equation \eqref{eq-lim-uk-zero} holds.

Since $x^k\in\Lzero$ for all $k$ and $\Lzero$ is compact, every subsequence of $\{x^k\}$ has a cluster point. Let $x^{k_j}\to\bar x$. If $\bar x\notin\mathcal{X}_w$, then there exists $y\in\R^n$ such that $f_i(y)<f_i(\bar x)$ for all $i\in[m]$. Hence, for some $\delta>0$ and all
sufficiently large $j$,
\[
    f_i(x^{k_j})-f_i(y)\ge \delta,
    \qquad i\in[m],
\]
which gives $u_{k_j}\ge\delta$, contradicting \eqref{eq-lim-uk-zero}. Thus every cluster point of $\{x^k\}$ belongs to $\mathcal{X}_w$. We now show that
\begin{equation}\label{eq:stationarity_to_zero}
   \lim_{k\rightarrow\infty} s_k= 0.
\end{equation}
Indeed, if not, there exist $\varepsilon>0$ and a subsequence $\{k_j\}$ such that $s_{k_j}\ge\varepsilon$. Since $\{x^k\}\subset\Lzero$ and $\Lzero$ is compact, we may pass to a further subsequence such that $x^{k_j}\to\bar x$. The preceding argument gives $\bar x\in\mathcal{X}_w$, and hence $s(\bar x)=0$ by Lemma~\ref{lem:weak_pareto_characterization}. By Lemma~\ref{lem:s_continuity}, $s$ is continuous, and hence
$s_{k_j}\to s(\bar x)=0$, contradicting $s_{k_j}\ge\varepsilon$. Thus, equation \eqref{eq:stationarity_to_zero} is true. 

By Lemma~\ref{lem:component-wise}, each sequence $\{f_i(x^k)\}$ is
nonincreasing. Since $\{x^k\}\subset\Lzero$ and $\Lzero$ is compact, the limits
\[
    \ell_i:=\lim_{k\to\infty}f_i(x^k),
    \quad i\in[m],
\]
exist. Moreover, every cluster point $\bar x$ of $\{x^k\}$ satisfies
$f_i(\bar x)=\ell_i$ for all $i\in[m]$.

We now prove $u_k=o(s_k)$ by contradiction. Suppose that there are $\varepsilon>0$ and a subsequence, still denoted by $\{k_j\}$, such that
\[
    u_{k_j}\ge \varepsilon s_{k_j}.
\]
Passing to a further subsequence if necessary, assume that $x^{k_j}\to\bar x$. Choose $\lambda^{k_j}\in\Lambda(x^{k_j})$. Since $\Deltam$ is compact, we may also assume that $\lambda^{k_j}\to\bar\lambda\in\Deltam$. Since $\lambda^{k_j}\in\Lambda(x^{k_j})$, the definition of $\Lambda$ gives
\[
    \left\|
        \sum_{i=1}^m\lambda_i^{k_j}\nabla f_i(x^{k_j})
    \right\|
    =
    s_{k_j}.
\]
Letting $j\to\infty$ and using \eqref{eq:stationarity_to_zero},
$x^{k_j}\to\bar x$, and $\lambda^{k_j}\to\bar\lambda$, we obtain
\[
    \sum_{i=1}^m\bar\lambda_i\nabla f_i(\bar x)=0.
\]
Let $S:=\{i\in[m]:\bar\lambda_i>0\}$. Then $S\ne\varnothing$. The convex function
\[
    \phi_{\bar\lambda}(z):=\sum_{i=1}^m\bar\lambda_i f_i(z)
\]
has zero gradient at $\bar x$, and hence $\bar x$ is a global minimizer of
$\phi_{\bar\lambda}$. Therefore, for every $z\in\R^n$, there exists
$i(z)\in S$ such that
\[
    f_{i(z)}(z)\ge f_{i(z)}(\bar x)=\ell_{i(z)}.
\]
Consequently,
\[
    \min_{r\in[m]}\{f_r(x^{k_j})-f_r(z)\}  \le  f_{i(z)}(x^{k_j})-f_{i(z)}(z) \le \max_{i\in S}\{f_i(x^{k_j})-\ell_i\}.
\]
Taking the supremum over $z\in\R^n$ gives
\begin{equation}\label{eq:merit_by_active_values}
    u_{k_j} \le \max_{i\in S}\{f_i(x^{k_j})-\ell_i\}.
\end{equation}

 Since $\bar\lambda_i>0$ for $i\in S$, there exists $\gamma>0$ such that $\lambda_i^{k_j}\ge\gamma$ for all $i\in S$ and all sufficiently large $j$. Together with $f_i(x^{k_j})-\ell_i>0$, equation \eqref{eq:merit_by_active_values} implies
\[
    u_{k_j} \le \frac1\gamma \sum_{i=1}^m \lambda_i^{k_j}\bigl(f_i(x^{k_j})-\ell_i\bigr).
\]
Using $f_i(\bar x)=\ell_i$ and the convexity of $f_i$, we obtain
\[
    f_i(x^{k_j})-f_i(\bar x) \le \langle \nabla f_i(x^{k_j}),x^{k_j}-\bar x\rangle .
\]
Therefore,
\[
    u_{k_j} \le \frac1\gamma \left\langle  \sum_{i=1}^m\lambda_i^{k_j}\nabla f_i(x^{k_j}),  x^{k_j}-\bar x \right\rangle \le \frac1\gamma s_{k_j}\|x^{k_j}-\bar x\|.
\]
Dividing by $s_{k_j}>0$ and letting $j\to\infty$ contradicts $u_{k_j}\ge\varepsilon s_{k_j}$. The proof is complete.
\end{proof}

We now derive the main merit complexity estimate. The result in Lemma~\ref{lem:U_little_o_s} is the additional ingredient that sharpens the usual $\OO(1/k^2)$ complexity to the asymptotic $o(1/k^2)$ bound below.

\begin{theorem}[Global $o(1/k^2)$ rate for the merit function]\label{th:little_o_rate}
Suppose Assumption~\ref{as:compact} holds, and let $\{x^k\}$ be generated by Algorithm~\ref{Algo-MRNM}. If the algorithm does not terminate in finitely many iterations, then
\begin{equation}\label{eq:little_o_rate}
    \Ugap(x^k)=o(1/k^2).
\end{equation}
If the algorithm terminates, the terminal point is weakly Pareto optimal.
\end{theorem}

\begin{proof}
If the algorithm terminates at some iteration $j$, then $s_j=0$, and Lemma~\ref{lem:weak_pareto_characterization} implies $x^j\in\mathcal{X}_w$, for which the desired result automatically holds. We therefore assume that the algorithm does not terminate in finite iterations. In particular, $s_k>0$ for every $k$. Let $u_k:=\Ugap(x^k)$. By the definition of $\Ugap$ and Lemma~\ref{lem:weak_pareto_characterization}, $u_k>0$ for every $k$.

Set
\[
    \rho_k:=\frac{u_k}{s_k}.
\]
By Lemma \ref{lem:U_little_o_s}, $\rho_k\to0$. For each $k\in\mathcal I_\infty$, Lemma~\ref{lem:good_decrease} gives
\[
    \frac{5}{12}\eta_k r_k^2
    \ge
    \frac{s_k^{3/2}}{120\sqrt H}.
\]
Combining this estimate with Lemma~\ref{lem:component-wise}{\rm (ii)}, we get
\begin{equation}\label{eq:good_step_stationarity_little_o}
    u_{k+1}
    \le
    u_k-\kappa s_k^{3/2},
    \qquad
    \kappa:=\frac{1}{120\sqrt H}.
\end{equation}
Hence, for $k\in\mathcal I_\infty$,
\[
    u_{k+1}
    \le
    u_k-\kappa\rho_k^{-3/2}u_k^{3/2}.
\]

If $\mathcal I_\infty$ is finite, by the definition of $\mathcal I_\infty$, the sequence $\{s_k\}$ then decays geometrically from some index onward. Proposition~\ref{thm:merit_properties} gives $u_k\le D_0s_k$, and hence \eqref{eq:little_o_rate} follows. It remains to consider the case in which $\mathcal I_\infty$ is infinite. Write
\[
    \mathcal I_\infty :=\{i_0<i_1<i_2<\cdots\}, \quad v_t:=u_{i_t}, \quad b_t:=\kappa\rho_{i_t}^{-3/2}.
\]
Since $\rho_k\to0$, we have $b_t\to\infty$. Moreover, $b_t>0$ and $v_t\ge0$. By the monotonicity of $\{u_k\}$ and \eqref{eq:good_step_stationarity_little_o},
\[
    v_{t+1}  =  u_{i_{t+1}} \le u_{i_t+1} \le v_t-b_t v_t^{3/2}.
\]
Thus the assumptions of Lemma~\ref{lem:variable_scalar_recursion} are satisfied, and hence
\begin{equation}\label{eq:good_subsequence_little_o}
    v_t=o(1/t^2).
\end{equation}
We now pass from the subsequence indexed by $\mathcal I_\infty$ to the full
sequence. For $k\ge1$, define
\[
    N_k:=|\mathcal I_k|.
\]
If $N_k\le k/2$, then using \eqref{eq:sk1_growth} on the indices in
$\mathcal I_\infty$ and the definition of $\mathcal I_\infty$ on the remaining
indices, we obtain
\[
    s_k
    \le
    \left(\frac52\right)^{N_k}
    \left(\frac14\right)^{k-N_k}s_0
    \le
    \left(\sqrt{\frac58}\right)^k s_0 .
\]
Hence Proposition~\ref{thm:merit_properties}{\rm (ii)} gives
\[
    u_k\le D_0s_k=o(1/k^2).
\]

It remains to consider the indices for which $N_k>k/2$. Let $t=N_k-1$.
Then $i_t$ is the last index in $\mathcal I_\infty$ before $k$. By the
monotonicity of $\{u_k\}$,
\[
    u_k\le u_{i_t}=v_t.
\]
Moreover, for all sufficiently large $k$,
\[
    t=N_k-1>k/2-1\ge k/4.
\]
Therefore \eqref{eq:good_subsequence_little_o} implies
\[
    u_k\le v_t=o(1/t^2)=o(1/k^2).
\]
The two cases prove \eqref{eq:little_o_rate}.
\end{proof}

\begin{remark}
Theorem~\ref{th:little_o_rate} immediately implies the global $\OO(1/k^2)$ merit complexity bound. When $m=1$, the merit function reduces to the objective gap,
\[
    \Ugap(x)=f_1(x)-\min_{z\in\R^n}f_1(z),
\]
while Algorithm~\ref{Algo-MRNM} reduces to the square-root regularized Newton method. Thus Theorem~\ref{th:little_o_rate} yields
\[
    f_1(x^k)-\min_{z\in\mathbb{R}^n} f_1(z)=o(1/k^2),
\]
which refines the scalar $\OO(1/k^2)$ guarantee in \cite{mishchenko2023regularized} to a little-$o$ statement. 
\end{remark}

\section{Essential Sharpness of the Polynomial Exponent}
\label{sec:barrier}

In this section, we show that the exponent $2$ in Theorem~\ref{th:little_o_rate} is essentially unimprovable in a uniform worst-case sense. The construction is one-dimensional and bi-objective, but along the generated trajectory the multi-objective subproblem reduces to a scalar regularized Newton step. 

\subsection{A one-dimensional barrier family}
Fix $p>2$. Define the even function $\psi_p:\R\to\R$ by
\[
    \psi_p(x):=
    \begin{cases}
        \dfrac{|x|^{p+1}}{p+1}, & |x|\le1,\\[0.4em]
        \dfrac{1}{p+1}+(|x|-1)+\dfrac{p}{2}(|x|-1)^2, & |x|\ge1.
    \end{cases}
\]
Then $\psi_p\in C^2(\R)$ is convex and coercive, and $\psi_p''$ is Lipschitz continuous with constant $H_p:=p(p-1)$. Consider
\begin{equation}\label{eq:family}
    \min_{x\in\R}\bigl(f_1(x),f_2(x)\bigr), \quad f_1(x):=\psi_p(x),\quad f_2(x):=2x+\psi_p(x).
\end{equation}
Fix $x^0\in(0,1)$ and define $\Lzero$ accordingly. For $x\in(0,1)$,
\[
    f_1'(x)=x^p,\quad f_2'(x)=2+x^p,\quad f_1''(x)=f_2''(x)=p x^{p-1}.
\]
Therefore
\begin{equation}\label{eq:skexpression}
    s(x)=\min_{\lambda\in[0,1]}
    \left|\lambda x^p+(1-\lambda)(2+x^p)\right|=x^p.
\end{equation}
Moreover, $0\in\mathcal{X}_w$, and no positive point is weak Pareto optimal because $0$ strictly improves both objectives at every $x>0$. Hence every $z\in\mathcal{X}_w\cap\Lzero$ satisfies $z\le0$. For $x\in(0,1)$ and $z\in\mathcal{X}_w\cap\Lzero$,
\[
    f_2(x)-f_2(z)  =   f_1(x)-f_1(z)+2(x-z) >  f_1(x)-f_1(z).
\]
Thus
\[
    \min\{f_1(x)-f_1(z),f_2(x)-f_2(z)\}=f_1(x)-f_1(z).
\]
Since $f_1$ is minimized at $0$ and $0\in\mathcal{X}_w\cap\Lzero$, Proposition~\ref{thm:merit_properties}(i) gives
\[
    \Ugap(x)=f_1(x)-f_1(0)=\frac{x^{p+1}}{p+1}.
\]

We next examine the subproblem. For $x\in(0,1)$, define
\[
    \phi_x(d) := \max\{q_1(d;x),q_2(d;x)\} +\frac{\eta}{2}d^2,
    \quad \eta>0.
\]
Since
\[
    q_2(d;x)=q_1(d;x)+2d,
\]
we have $q_2(d;x)\ge q_1(d;x)$ for $d\ge0$ and
$q_2(d;x)<q_1(d;x)$ for $d<0$. If $d\ge0$, then
\[
    \phi_x(d)
    =
    q_2(d;x)+\frac{\eta}{2}d^2
    =
    (2+x^p)d+\frac12 p x^{p-1}d^2+\frac{\eta}{2}d^2
    \ge0.
\]
On the other hand, for all sufficiently small $d<0$,
\[
    \phi_x(d)
    =
    q_1(d;x)+\frac{\eta}{2}d^2
    =
    x^p d+\frac12 p x^{p-1}d^2+\frac{\eta}{2}d^2
    <0.
\]
Therefore, the minimizer must lie in $(-\infty,0)$, and the active model is $q_1$, so the subproblem reduces to the scalar quadratic problem
\begin{equation}\label{eq:dkexpression}
    \min_{d<0}
    \left\{
        x^p d+\frac12 p x^{p-1}d^2+\frac{\eta}{2}d^2
    \right\}.
\end{equation}

\subsection{Asymptotic rates of the recursion}
To clarify the role of the square-root choice and to show the sharpness barrier in a slightly broader form, we analyze the more general regularization rule
\[
    \eta_k=C\,s(x^k)^\alpha,\quad {\rm with} \ \ C>0,\quad \alpha\in(0,1].
\]
The square-root rule used in Algorithm~\ref{Algo-MRNM} corresponds to
$\alpha=1/2$ and $C=2\sqrt{H_p}$. For a current point $x\in(0,1)$, recall that $s(x)$ is derived in \eqref{eq:skexpression}, and this combines with \eqref{eq:dkexpression} gives
\begin{equation}\label{eq:general-step}
    d(x)
    =
    -\frac{x^p}{p x^{p-1}+C x^{p\alpha}},
\end{equation}
and the trajectory is $x^{k+1}=x^k+d(x^k)$.

\begin{theorem}\label{th:power-law-barrier}
Fix $\alpha\in(0,1]$ and $C>0$. For the family \eqref{eq:family}, consider the iteration generated by \eqref{eq:general-step} with initial point $x^0 \in (0,1)$. Then:
\begin{itemize}
    \item[\rm (i)] If $\alpha<1-\frac1p$, then $x^k\downarrow0$ and there exist
    constants $c_2>c_1>0$ and $K\ge1$ such that, for all $k\ge K$,
    \[
        c_1 k^{-q_{p,\alpha}}
        \le
        \Ugap(x^k)
        \le
        c_2 k^{-q_{p,\alpha}},
        \qquad
        q_{p,\alpha}:=\frac{p+1}{p(1-\alpha)-1}.
    \]
    \item[\rm (ii)] If $\alpha=1-\frac1p$, then 
    \[
        x^{k+1}=\left(1-\frac{1}{p+C}\right)x^k.
    \]
    \item[\rm (iii)] If $\alpha>1-\frac1p$, then 
    \[
        \lim_{k\to\infty}\frac{x^{k+1}}{x^k}=1-\frac1p.
    \]
\end{itemize}
\end{theorem}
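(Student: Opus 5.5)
Everything reduces to the scalar map $x\mapsto g(x):=x+d(x)$ on $(0,1)$. Using \eqref{eq:general-step}, I would write
\[
    \frac{x^{k+1}}{x^k}=1-\frac{x^{p-1}}{p x^{p-1}+C x^{p\alpha}}=1-\frac{1}{p+C x^{p\alpha-(p-1)}},
\]
so the ratio depends only on the sign of $e:=p\alpha-(p-1)=p\bigl(\alpha-(1-\tfrac1p)\bigr)$. First I check invariance of $(0,1)$: the ratio lies in $(1-\tfrac1p,1)\subset(0,1)$. Hence $x^k\in(0,1)$ for all $k$, the derivation of \eqref{eq:dkexpression} applies at every iterate, and $x^k$ decreases strictly to some limit $\bar x\ge0$. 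If $\bar x>0$, the ratio would stay bounded away from $1$ near $\bar x$, which is a contradiction. So $x^k\downarrow0$ in all cases.

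Case (ii) is immediate, since $e=0$ gives ratio $1-\frac{1}{p+C}$. For case (iii), $e>0$, so $Cx^{e}\to0$ as $x^k\to0$ and the ratio tends to $1-\frac1p$.

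For case (i), $e<0$. Set $\beta:=-e=p(1-\alpha)-1>0$. Then
\[
    x^{k+1}=x^k-\frac{(x^k)^{1+\beta}}{C+p(x^k)^{\beta}} .
\]
I will study $w_k:=(x^k)^{-\beta}$ and aim to show $w_{k+1}-w_k\to \beta/C$. By the mean value theorem, or by expanding $(1-t)^{-\beta}=1+\beta t+O(t^2)$ with $t_k:=(x^k)^{\beta}/(C+p(x^k)^\beta)\to0$, I get
\[
    w_{k+1}=w_k(1-t_k)^{-\beta}=w_k+\beta w_k t_k+O(w_kt_k^2).
\]
Here $w_kt_k=1/(C+p(x^k)^\beta)\to 1/C$ and $w_kt_k^2\to0$. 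Stolz--Ces\`aro then gives $w_k/k\to\beta/C$, that is, $x^k\sim (\beta k/C)^{-1/\beta}$. Since $\Ugap(x^k)=(x^k)^{p+1}/(p+1)$, as computed above from Proposition~\ref{thm:merit_properties}(i), it follows that $\Ugap(x^k)\sim \frac{1}{p+1}(\beta k/C)^{-(p+1)/\beta}$. The exponent $(p+1)/\beta$ is exactly $q_{p,\alpha}$. Two-sided constants $c_1<c_2$ for $k\ge K$ follow from the asymptotic equivalence, for instance by taking the limit constant times $1/2$ and times $2$.

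The main obstacle is making the Taylor step in case (i) uniform. I need $t_k\in(0,1)$ bounded away from $1$; this holds because $t_k\le 1/p<1/2$ since $p>2$. With that, the remainder $O(t_k^2)$ is controlled by a fixed constant depending on $\beta$ and $p$. Apart from this, the argument is routine.
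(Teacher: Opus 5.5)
Your proposal is correct and follows essentially the same route as the paper: the same ratio bound $x^{k+1}/x^k\in(1-\tfrac1p,1)$ gives invariance of $(0,1)$ and $x^k\downarrow0$, cases (ii) and (iii) are read off from the sign of $p\alpha-(p-1)$, and for (i) your substitution $w_k=(x^k)^{-\beta}$, followed by a first-order expansion of $(1-t_k)^{-\beta}$ and Ces\`aro averaging, is exactly the argument of Lemma~\ref{lem:scalar-asymp}. Your $\beta$ is the paper's $\gamma=\beta_{p,\alpha}-1$, and $c=1/C$. The only difference is that you carry out that lemma inline on the exact recursion, whereas the paper invokes it through the limit $-d(x)/x^{p(1-\alpha)}\to 1/C$.
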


 \begin{proof}
From \eqref{eq:general-step},
\[
    \frac{-d(x)}{x}
    =
    \frac{1}{p+C x^{p\alpha-p+1}},
\]
which implies that $0<-d(x)<x/p$ for all $x\in(0,1)$. Since $x^0 \in (0,1)$ and $p>2$, the iterates remain in $(0,1)$ and strictly decrease. Their limit must be $0$, since any positive limit would imply a nonzero limiting decrease.

{\rm (i)} If $\alpha<1-\frac1p$, then $p\alpha<p-1$. Thus
\[
    \lim_{x\searrow0}\frac{-d(x)}{x^{p(1-\alpha)}}=\frac1C.
\]
Set $\beta_{p,\alpha}:=p(1-\alpha)>1$. Lemma~\ref{lem:scalar-asymp} gives
\[
    \lim_{k\to\infty}
    k^{1/(\beta_{p,\alpha}-1)}x^k
    =
    \bigl(C^{-1}(\beta_{p,\alpha}-1)\bigr)^{-1/(\beta_{p,\alpha}-1)}.
\]
Since $\Ugap(x^k)=(x^k)^{p+1}/(p+1)$, the claimed two-sided bound follows.

{\rm (ii)} If $\alpha=1-\frac1p$, then $p\alpha=p-1$, and
\[
    d(x)=-\frac{x}{p+C}.
\]
This gives the stated linear recursion.

{\rm (iii)} If $\alpha>1-\frac1p$, then $p\alpha>p-1$, and therefore
\[
    \lim_{x\searrow0}\frac{-d(x)}{x}=\frac1p.
\]
Since $x^k\to0$, this implies
\[
    \lim_{k\to\infty}\frac{x^{k+1}}{x^k}
    =
    1-\frac1p.
\]
\end{proof}

We now apply the preceding result to the square-root regularization rule used in Algorithm~\ref{Algo-MRNM}. This gives the rate on the constructed trajectories and shows that its exponent can be arbitrarily close to \(2\).

\begin{corollary}\label{cor:square-root}
Suppose $p>2$ and use the square-root rule
\[
    \eta_k=2\sqrt{H_p s(x^k)}.
\]
Then there exist constants $c_2>c_1>0$ and an integer $K\ge1$ such that, for all
$k\ge K$,
\[
    c_1 k^{-q_p} \le \Ugap(x^k)  \le  c_2 k^{-q_p}, \qquad  q_p:=\frac{2(p+1)}{p-2}  = 2+\frac{6}{p-2}.
\]
Consequently, for every $\delta>0$, no uniform estimate of the form
\[
    \Ugap(x^k)=\OO(k^{-(2+\delta)})
\]
can hold over this family.
\end{corollary}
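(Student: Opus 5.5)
This is a direct specialization of Theorem~\ref{th:power-law-barrier}(i). The square-root rule $\eta_k=2\sqrt{H_p s(x^k)}$ fits the general form $\eta_k=C\,s(x^k)^\alpha$ with $\alpha=1/2$ and $C=2\sqrt{H_p}>0$. Since $p>2$, we have $1-\frac1p>\frac12=\alpha$, so case~(i) applies. We should also confirm that the trajectory of Algorithm~\ref{Algo-MRNM} on the family \eqref{eq:family} is exactly the iteration \eqref{eq:general-step}. By \eqref{eq:skexpression}, $s(x)=x^p>0$ on $(0,1)$, so the algorithm never terminates. By the reduction \eqref{eq:dkexpression}, the subproblem minimizer is the scalar step \eqref{eq:general-step}, and the proof of Theorem~\ref{th:power-law-barrier} keeps the iterates in $(0,1)$. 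The method does not depend on $H$ beyond $\eta_k$, and $H_p=p(p-1)$ is a valid Hessian Lipschitz constant for both $f_1$ and $f_2$, so the algorithm's hypotheses hold.

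Next I compute the exponent. We have $q_{p,1/2}=\frac{p+1}{p/2-1}=\frac{2(p+1)}{p-2}=2+\frac{6}{p-2}$, and Theorem~\ref{th:power-law-barrier}(i) then gives the constants $c_2>c_1>0$ and $K$ with $c_1k^{-q_p}\le\Ugap(x^k)\le c_2k^{-q_p}$ for $k\ge K$.

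For the sharpness claim, fix $\delta>0$ and choose $p>2$ large enough that $\frac{6}{p-2}<\delta$, for instance $p>2+6/\delta$. Then $q_p<2+\delta$. Suppose $\Ugap(x^k)\le M k^{-(2+\delta)}$ held for all $k$ on this trajectory. Combining with the lower bound gives $c_1k^{-q_p}\le Mk^{-(2+\delta)}$, so $k^{2+\delta-q_p}\le M/c_1$ for all $k\ge K$. This is impossible because the exponent is positive. So already a single member of the family, with fixed $p$ and fixed $x^0\in(0,1)$, violates the bound, and a fortiori no uniform estimate can hold. This also explains why the exponent $2$ cannot be improved uniformly: as $p\to\infty$, $q_p\downarrow2$.

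There is no serious obstacle. The only point to state carefully is the verification that Algorithm~\ref{Algo-MRNM} produces precisely the recursion \eqref{eq:general-step} with $(\alpha,C)=(1/2,2\sqrt{H_p})$. This rests on the active-model analysis preceding \eqref{eq:dkexpression} and on the invariance of $(0,1)$ shown in the proof of the theorem.
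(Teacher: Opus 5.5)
Your proposal is correct and matches the paper's argument, which the paper leaves implicit: specialize Theorem~\ref{th:power-law-barrier}(i) with $\alpha=1/2$ and $C=2\sqrt{H_p}$ to get $q_p=\frac{2(p+1)}{p-2}$, then take $p>2+6/\delta$ so that the lower bound $c_1k^{-q_p}$ rules out any $\OO(k^{-(2+\delta)})$ estimate. Your extra checks are the right ones to make explicit, namely that $s(x)=x^p>0$ (so the method never terminates), that the iterates stay in $(0,1)$, and that $H_p$ is a valid Hessian Lipschitz constant for both objectives.
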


Theorem~\ref{th:little_o_rate} and Corollary~\ref{cor:square-root} together clarify the role of the exponent \(2\) for the square-root regularized Newton method. The merit function satisfies the global asymptotic bound \(o(1/k^2)\) along every generated sequence, while the barrier family shows that this exponent cannot be improved in a uniform polynomial sense.

\section{Conclusion}

In this paper, we studied a regularized Newton method for unconstrained convex multi-objective optimization. Under the compactness of the initial component-wise lower level set and Lipschitz continuity of the Hessians, we established the global asymptotic merit estimate $\Ugap(x^k)=o(1/k^2)$. In the scalar specialization, the little-$o$ estimate improves the $\OO(1/k^2)$ rate for the regularized Newton method established by Mishchenko~\cite{mishchenko2023regularized}.

We also constructed a one-dimensional bi-objective family showing that the exponent $2$ is essentially sharp in a uniform sense, that is, for any fixed $\delta>0$, no uniform estimate of order $\OO(k^{-(2+\delta)})$ holds over this family. Thus, the $o(1/k^2)$ refinement does not imply a uniform improvement to any higher fixed polynomial rate.

\bibliographystyle{siam}
\bibliography{references}

\appendix
\section{Two auxiliary lemmas}

The first lemma is a simple variant of the standard scalar recursion estimates used in regularized Newton complexity analysis.

\begin{lemma}\label{lem:variable_scalar_recursion}
Let $\{v_t\}_{t\ge0}$ be a nonnegative sequence and let
$\{b_t\}_{t\ge0}$ be a positive sequence with $b_t\to\infty$. Suppose that
\[
    v_{t+1}\le v_t-b_t v_t^{3/2}, \quad \text{for all }t\ge0,
\]
Then
\[
    v_t=o(1/t^2).
\]
\end{lemma}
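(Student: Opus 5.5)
The plan is to reduce to the standard square-root transformation, in which the recursion $v_{t+1}\le v_t-bv_t^{3/2}$ with fixed $b$ yields $O(1/t^2)$. Then $b_t\to\infty$ is used to upgrade this to $o(1/t^2)$.

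\textbf{Trivial cases.} If $v_T=0$ for some $T$, then the recursion gives $0\le v_{T+1}\le 0$. Hence $v_t=0$ for all $t\ge T$, and the claim is trivial. So I will assume $v_t>0$ for all $t$. Nonnegativity forces $b_tv_t^{1/2}\le 1$, and $\{v_t\}$ is nonincreasing.

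\textbf{Key step.} Set $w_t:=v_t^{-1/2}$. From $v_{t+1}\le v_t(1-b_tv_t^{1/2})$ I will estimate
\[
w_{t+1}-w_t=\frac{v_t^{1/2}-v_{t+1}^{1/2}}{v_t^{1/2}v_{t+1}^{1/2}}
=\frac{v_t-v_{t+1}}{v_t^{1/2}v_{t+1}^{1/2}\bigl(v_t^{1/2}+v_{t+1}^{1/2}\bigr)}
\ge\frac{b_tv_t^{3/2}}{2v_t^{3/2}}=\frac{b_t}{2}.
\]
The inequality uses $v_{t+1}\le v_t$ to bound the denominator above by $2v_t^{3/2}$. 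This estimate has no issue with $1-b_tv_t^{1/2}$ being near zero, because the monotonicity bound on the denominator is valid regardless. Summing gives
\[
w_t\ge w_0+\frac12\sum_{j=0}^{t-1}b_j.
\]

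\textbf{Conclusion.} Since $b_j\to\infty$, the Cesàro means satisfy $\frac1t\sum_{j<t}b_j\to\infty$. Hence $w_t/t\to\infty$, and so $v_t=w_t^{-2}=o(1/t^2)$. The argument is elementary; the only point needing care is the denominator bound in the key step, handled by monotonicity as noted.
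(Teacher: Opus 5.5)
Your proof is correct and follows essentially the same argument as the paper: substitute $w_t=v_t^{-1/2}$, prove the one-step bound $w_{t+1}\ge w_t+b_t/2$, sum it, and use that the averages of $b_t$ diverge. The only cosmetic difference is how you get the one-step bound: you use the conjugate identity together with $v_{t+1}\le v_t$, whereas the paper uses the inequality $(1-a)^{-1/2}\ge 1+a/2$ for $a\in(0,1)$.
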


\begin{proof}
If $v_t=0$ for some $t$, the conclusion is immediate. Otherwise, $v_t>0$ for
all $t$. Since
\[
    0<v_{t+1}\le v_t-b_t v_t^{3/2} =v_t(1-b_t\sqrt{v_t}),
\]
we have $b_t\sqrt{v_t}\in(0,1)$ for every $t$.

Using $(1-a)^{-1/2}\ge 1+a/2$ for $a\in(0,1)$, we obtain
\[
    \frac1{\sqrt{v_{t+1}}} \ge \frac1{\sqrt{v_t-b_t v_t^{3/2}}} = \frac1{\sqrt{v_t}}\frac1{\sqrt{1-b_t\sqrt{v_t}}} \ge \frac1{\sqrt{v_t}}+\frac{b_t}{2}.
\]
Summing from $0$ to $t-1$ yields
\[
    v_t  \le  \frac{4}{\left(\sum_{\ell=0}^{t-1}b_\ell\right)^2}.
\]
Since $b_t\to\infty$,
\[
    \lim_{t\to\infty}\frac1t\sum_{\ell=0}^{t-1}b_\ell = \infty.
\]
Therefore,
\[
    t^2 v_t \le \frac{4}{\left(t^{-1}\sum_{\ell=0}^{t-1}b_\ell\right)^2} \to0,
\]
which proves $v_t=o(1/t^2)$.
\end{proof}

The next lemma turns an asymptotic one-step recursion into the corresponding polynomial decay rate. Its proof is elementary and is included for completeness.
    \begin{lemma}
        \label{lem:scalar-asymp} Let $\{t_{k}\}$ be a positive sequence with $\lim_{k\to\infty}t_{k} = 0$ and suppose there exist constants $c>0$ and $\beta >1$ such that
        \begin{equation}\label{A2condition}
            \lim_{k\to\infty}\frac{t_{k} - t_{k+1}}{t_{k}^{\beta}}= c.
        \end{equation}
        Then the sequence satisfies
        \[
            \lim_{k\to\infty}k^{\frac{1}{\beta-1}}t_{k} = \bigl(c(\beta-1)\bigr)^{-\frac{1}{\beta-1}}.
        \]
    \end{lemma}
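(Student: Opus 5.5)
The plan is to reduce the statement to a Cesàro-type limit for an auxiliary sequence. Set $w_k := t_k^{-(\beta-1)}$ and show that
\[
    \lim_{k\to\infty}(w_{k+1}-w_k)=c(\beta-1).
\]
Since $t_k>0$ and $t_k\to0$, we have $w_k\to\infty$. If the limit above holds, the Stolz--Cesàro theorem (or simply averaging the increments) gives $w_k/k\to c(\beta-1)$. This is equivalent to
\[
    k^{1/(\beta-1)}t_k=(w_k/k)^{-1/(\beta-1)}\to\bigl(c(\beta-1)\bigr)^{-1/(\beta-1)},
\]
which is the claim.

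To compute the increment, write $t_{k+1}=t_k(1-\epsilon_k)$ with $\epsilon_k:=(t_k-t_{k+1})/t_k$. By \eqref{A2condition}, $\epsilon_k = t_k^{\beta-1}\bigl(c+o(1)\bigr)$. Because $\beta>1$ and $t_k\to0$, this gives $\epsilon_k\to0$. In particular $1-\epsilon_k>0$ for large $k$, consistent with positivity. Then
\[
    w_{k+1}-w_k = t_k^{-(\beta-1)}\Bigl[(1-\epsilon_k)^{-(\beta-1)}-1\Bigr].
\]
The first-order expansion $(1-\epsilon)^{-(\beta-1)}=1+(\beta-1)\epsilon+O(\epsilon^2)$ as $\epsilon\to0$ turns the bracket into $(\beta-1)\epsilon_k(1+o(1))$. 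Multiplying by $t_k^{-(\beta-1)}$ and using $\epsilon_k t_k^{-(\beta-1)}\to c$ yields $w_{k+1}-w_k\to c(\beta-1)$.

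The only delicate point is making the expansion rigorous. Here $\epsilon_k$ is only known to tend to $0$, not to have any sign or uniform bound beyond what the hypothesis gives. I would handle this with the mean value theorem: $(1-\epsilon)^{-(\beta-1)}-1=(\beta-1)\epsilon(1-\xi)^{-\beta}$ for some $\xi$ between $0$ and $\epsilon$. Since $\xi\to0$, the factor $(1-\xi)^{-\beta}\to1$. The remaining steps, the Stolz--Cesàro passage and the inversion of the power, are routine.
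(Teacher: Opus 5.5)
Your proposal is correct and follows the paper's proof step by step: your $\epsilon_k=(t_k-t_{k+1})/t_k$ is exactly the paper's $u_k$, and the paper likewise shows that the increments of $t_k^{-(\beta-1)}$ tend to $c(\beta-1)$ via a first-order expansion of $(1-u)^{-(\beta-1)}$, then averages the increments and inverts the power. Your mean value theorem argument is just a slightly more explicit version of the paper's Taylor remainder $\rho_k u_k$ with $\rho_k\to0$.
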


    \begin{proof}
Set
\[
    \gamma:=\beta-1>0, \quad y_k:=t_k^{-\gamma}.
\]
Since $t_k\to0$, we have $y_k\to\infty$. By the hypothesis \eqref{A2condition}, we write
\[
    t_{k+1} = t_k-c t_k^\beta+r_k,
\]
where $r_k$ satisfies $\lim_{k\rightarrow \infty} \frac{r_k}{t_k^\beta} = 0.$ Dividing both sides by $t_k$, we get
\[
    \frac{t_{k+1}}{t_k}  = 1-c t_k^\gamma+\frac{r_k}{t_k} = 1-u_k, \ \ {\rm with} \ \ u_k:=c t_k^\gamma-\frac{r_k}{t_k}.
\]
From the definition of $u_k$,
\[
    \frac{u_k}{t_k^\gamma}  =  c-\frac{r_k}{t_k^\beta}.
\]
Since $\frac{r_k}{t_k^\beta} \to 0$, we have $ \frac{u_k}{t_k^\gamma} \to c$, for which $u_k\to0$, and $u_k>0$ for all sufficiently large $k$.

Note that
\[
\begin{aligned}
    y_{k+1}-y_k &= t_{k+1}^{-\gamma}-t_k^{-\gamma}   = t_k^{-\gamma} \left[ \left(\frac{t_{k+1}}{t_k}\right)^{-\gamma}-1 \right] = t_k^{-\gamma} \left[  (1-u_k)^{-\gamma}-1  \right].
\end{aligned}
\]
Since $u_k\to0$, the Taylor expansion of $(1-u)^{-\gamma}$ at $u=0$
gives
\[
    (1-u_k)^{-\gamma}-1 = \gamma u_k+\rho_k u_k, \quad {\rm with} \ \ \lim_{k\rightarrow \infty}\rho_k = 0.
\]
Therefore,
\[
\begin{aligned}
    y_{k+1}-y_k &= t_k^{-\gamma} \left(\gamma u_k+\rho_k u_k\right) = \left(\gamma+\rho_k\right)  \frac{u_k}{t_k^\gamma}.
\end{aligned}
\]
Using $u_k/t_k^\gamma\to c$ and $\rho_k\to0$, we obtain
\[
   \lim_{k\rightarrow\infty} y_{k+1}-y_k =  \gamma c.
\]
Thus,
\[
    \frac{y_k}{k}  = \frac{y_0}{k} + \frac1k\sum_{\ell=0}^{k-1}(y_{\ell+1}-y_\ell) \to \gamma c.
\]
Since $y_k=t_k^{-\gamma}$, we have
\[
    \lim_{k\rightarrow\infty} k t_k^\gamma = \frac1{\gamma c},
\]
and hence
\[
   \lim_{k\rightarrow\infty}  k^{1/\gamma}t_k = (\gamma c)^{-1/\gamma}.
\]
Finally, substituting $\gamma=\beta-1$ gives
\[
  \lim_{k\rightarrow\infty}  k^{\frac1{\beta-1}}t_k =  \bigl(c(\beta-1)\bigr)^{-\frac1{\beta-1}},
\]
and this proves the desired result.
\end{proof}
\end{document}